\documentclass[twoside]{article}
\usepackage[a4paper, left=3.5cm, right=3.5cm, top=3cm, bottom=3cm]{geometry}
\usepackage{graphicx} 
\usepackage[final]{showlabels}
\usepackage[backend=biber,style=numeric]{biblatex}
\addbibresource{references.bib}

\usepackage{amsmath,amssymb,amsthm,yhmath,amsfonts,mathrsfs,enumitem,mathtools,dsfont}

\usepackage{tikz}
\usetikzlibrary{automata,shapes,arrows,positioning}

\newcommand{\pr}{\mathbb{P}}
\newcommand{\E}{\mathbb{E}}

\newcommand{\dd}{\mathop{}\!\mathrm{d}}

\newcommand{\1}[1]{{\mathds{1}}_{\left\{#1\right\}}}

\newtheorem{lemma}{Lemma}
\newtheorem{remark}{Remark}
\newtheorem{theorem}{Theorem}
\newtheorem{definition}{Definition}
\newtheorem{corollary}{Corollary}

\newcommand{\keywords}[1]{\noindent\textbf{Keywords:} #1}
\newcommand{\subjclass}[1]{\noindent\textbf{AMS2020 subject classification:} #1}

\usepackage{fancyhdr}
\pagestyle{fancy}
\fancyhf{}  
\fancyhead[LE,RO]{Effects of initial conditions on mean-field approximation accuracy}  
\fancyhead[RE,LO]{P. Dionigi, D. Keliger}  
\fancyfoot[C]{\thepage}

\usepackage[colorlinks=true, linkcolor=blue, citecolor=red, urlcolor=magenta]{hyperref}
\usepackage{orcidlink}

\title{The effects of initial conditions on the accuracy of mean-field approximations of Markov processes on large random graphs}

\usepackage{authblk}

\author[1]{Pierfrancesco Dionigi\,\orcidlink{0000-0003-2180-8669}\thanks{Pierfrancesco Dionigi is supported by the ERC Synergy under Grant No. 810115.\\ \texttt{email: dionigi@renyi.hu}}}
\author[1,2]{Dániel Keliger\,\orcidlink{0000-0003-4981-098X}\thanks{Dániel Keliger is partially supported by the ERC Synergy under Grant No. 810115 - DYNASNET and NKFI-FK-142124. \\ \texttt{email: zunerd@renyi.hu} } }
\affil[1]{HUN-REN Alfréd Rényi Institute of Mathematics, Budapest, Hungary}
\affil[2]{Department of Stochastics, Budapest University of Technology and Economics, Hungary}

\date{\today}

\begin{document}
\maketitle

\begin{abstract}
We study the evolution of a general class of stochastic processes (containing, e.g. SIS and SIR models) on large random networks, focusing on a particular general class of random graph models. To approximate the expected dynamics of these processes, we employ a mean-field technique known as the \emph{N-Intertwined Mean-Field Approximation} (NIMFA) \cite{vanmieghem2009virus}. Our primary goal is to quantify the impact of the randomness in the network topology on the accuracy of such approximations. While classical work by Kurtz \cite{kurtz1978strong} established strong approximation results for density-dependent Markov chains using  mean-field on the complete-graph of order $N$, yielding error bounds of order \( \frac{1}{\sqrt{N}} \), we demonstrate that in the context of NIMFA on random graphs, the error admits a refined characterization depending on initial conditions. Specifically, for generic initial conditions, the worst case error is of order \( \frac{1}{\sqrt{d}} \),  where \( d \) is the expected average degree of the graph. This result highlights how network sparsity contributes an additional source of variability not captured by classical mean-field approaches. Furthermore, when the initial conditions are taken to be fairly homogeneous, we show that the error term is of order \( \frac{1}{\sqrt{N}}+\frac{1}{d}\), where \( N \) is the number of nodes, underscoring the intricate role of the choice of initial conditions in the error bounds. Our analysis connects with recent interest in quantifying the role of graph heterogeneity in epidemic thresholds and dynamics \cite{chakrabarti2008epidemic, pastor2015epidemic, demirel2017disease}.
\end{abstract}
\keywords{Stochastic Processes, Markov Chains, Mean-field Approximation, NIMFA, Random Graphs.}\\
\subjclass{60K35,\,60K37,\,60J27,\,82C22,\,05C80,\,91D30}
\section{Introduction}

The study of stochastic processes on networks is central to understand the spread of epidemics, information, and behaviors in complex systems. This in turn prompted in the recent years a prolific research aimed to understand the underlying geometrical structure governing these complex systems and how it influences dynamics. This was one of the main drives that triggered the boom of network science during the last 30 years: networks indeed represent a pivotal tool in the study of systems constructed by complex interactions. The variety of areas where networks have been adopted is huge; their application in biology, economics, sociology, industrial applications and many more different areas required the birth of a very diversified range of \emph{networks models} (or \emph{ensembles} in physics literature) that are able to capture specific features of the real world system they try to represent \cite{newman2003structure,barabasi2016network,chuang2010decade,schweitzer2009economic}. Together with the classic Erd\H{o}s-Rényi random graph is worth to mention, Chung-Lu random graphs, Inhomogeneous random graphs, Stochastic Block model, Preferential Attachment models and Maximal Entropy random graphs \cite{bollobasPhaseTransitionInhomogeneous2007,ChungLu, BAnetwork, random_geometric_network,squartiniMaximumEntropyNetworksPattern2017}. A lot of research focused as well on how the main properties of these graph models might affect the dynamical behavior, such as spectral properties, degree fluctuations, connectivity properties and many more \cite{MCKEE2024, graphon_spin, random_walk_evolving_set,dionigiSpectralSignatureBreaking2021}. 
On the other hand, there is a complete opposite driving force in the study of stochastic processes: for most of them the exact solution of the related master equation is unfeasible. Furthermore, most of the times our ability to solve (or at least give an attempt) these problems heavily depends on the set of possible initial conditions, leading to leading to three layers of randomness. Therefore, over the years, different approaches relying either on assumptions that lead to simplifications or on approximations were attempted. Most of the times these approaches rely on averaging the noise provided by the background randomness of the graph, or try to plug in independence conditions in order for some equations to decouple, provided that the initial conditions respect certain conditions; collectively these approaches are called \emph{mean-field} models. These approximations often reduce the complexity of the system by replacing stochastic interactions with deterministic equations that describe average behavior. Therefore, each of these approaches, no matter how sophisticated, introduce approximations, and, as with every approximation, we pay an an error in the accuracy of our model. 
Given the richness in the choice of the underlying graph model, in the possible initial conditions, and in the ways in which the processes can be approximated, is of central interest understanding the error we pay for different choices and different approximations. This translates into trying to figure out general upper and lower bounds that assure how much this error can be and how it can depend from the possible choices of graph ensembles, initial conditions, and process' dynamics. This question relates closely to the classical work by Kurtz \cite{kurtz1978strong}, who developed strong approximation theorems for density-dependent Markov chains. In his framework, the network structure is implicitly averaged out, effectively modeling interactions as occurring on a weighted complete graph. Under this assumption, he derived an error bound of order \( \frac{1}{\sqrt{N}} \), where \( N \) is the size of the system. Despite penalizing the network geometry, the seminal idea of Kurtz has been for long time the main result in this area. 
Attempts have been made to indeed plug in a more refined account for the network geometry. For example, in \cite{guerraAnnealedMeanfieldFormulations2010}, the authors show how to implement an Heterogeneous Mean-Field Approximation(HMFA) is effective to take into account the annealed version of the underlying random graph model, i.e. the average adjacency matrix.

In this paper, we investigate the performance of NIMFA, the \emph{N-Intertwined Mean-Field Approximation}, when applied to stochastic processes on a fairly general class of random graphs that are able to model real world communities. This technique has gained attention due to its tractability and improved accuracy over naive homogeneous and heterogeneous mean-field models, particularly when applied to heterogeneous networks, which are not necessarily dense, but have large degrees \cite{vanmieghem2009virus,10.1145/3508033, Sridhar2023}. Our focus is on understanding how the randomness of the underlying network model influences the accuracy of this approximation, particularly as the graph size and average degree vary. In contrast with the work of Kurtz, the NIMFA approach retains the individual network structure and thus introduces an additional source of randomness due to the graph itself, i.e. the graphs can be considered \emph{quenced} without any necessary averaging procedure (\emph{annealing}). Our main contribution is to rigorously quantify this additional randomness source and its influence on the error term. Our work relies on the results of \cite{keligerConcentrationMeanField2024a}. There the authors show a tight bound of order \( \frac{1}{\sqrt{N}} + \frac{1}{d} \) where $d$ is the average degree of the quenched graph. In our work (Theorems \ref{t:general_homogeneous_upper_bound} and \ref{t:homogeneous_lower_bound}), in the special case when the initial conditions of the process are \emph{fairly} homogenous, we show that the approximation error for the \emph{annealed NIMFA}, i.e. the HRF obtained by annealing the graph model after applying the NIMFA, is of the same order, \( \frac{1}{\sqrt{N}} + \frac{1}{d} \), where \( d \) is the expected average degree of the network. This result demonstrates that in sparse networks, where \( d \ll \sqrt{N} \), the randomness of the graph has a non-negligible effects on the mean-field accuracy, and must be accounted for in modeling and inference tasks. Furthermore for general initial conditions, surprisingly, the worst case error is of order \(\frac{1}{\sqrt{d}}\). This error is much larger than the one obtained for homogenous initial conditions when \( d \ll N \). 

The rest of the paper is organized as follows. In Section \ref{sec:setup}, we introduce the setup necessary to state our main results. In Section \ref{sec:results} we state our main results and in Section \ref{sec:proofs} we present the proofs or our theorems. Finally in Section \ref{sec:conclusion} we discuss implications, limitations, and directions for future work.

\section{Setup}\label{sec:setup}
In this section we will introduce the main mathematical objects we will work with and will be needed for stating our results in section \ref{sec:results}. In \ref{sec:graph_model} we introduce the class of graph models we will work with. In \ref{sec:stoch} we enter in the details of stochastic processes and the mean-field technique we will use, NIMFA, together with our annealed approximation; we will introduce the quantity we would like to approximate and we finally provide a couple of examples of familiar models to give a better understanding of our techniques.
\subsection{The graph model}\label{sec:graph_model}

We use the stochastic block model to generate inhomogeneous graphs. The number of vertices is denoted by $N$ and the adjacency matrix is $A=\left(a_{ij} \right)_{i,j=1}^N$. The vertices are partitioned into
\begin{align}\label{eq:partition}
    \bigsqcup_{k=1}^{K} I_k=\{1,\dots,N \}.
\end{align}
The sizes are assumed to be "macroscopic", that is, there is a constant $0<c_{block}<1$ such that
\begin{align*}
    N_k:=\left |I_k \right| \geq c_{block}N.
\end{align*}
The fraction of vertices belonging to the $k$th partition is
\begin{align*}
\pi_{k}:= \frac{N_k}{N} \geq c_{block}.
\end{align*}
Each edge $ij$ is drawn independently with probability 
\begin{align*}
   p_{ij}:= \pr \left(a_{ij}^N=1 \right)=\rho w_{kl} \ \left( i \in I_k, \ j \in I_l, \ i \neq j \right), 
\end{align*}
  where $\rho$ is a density parameter and the weights are assumed to be bounded: $w_{kl} \leq M.$

Without the loss of generality we may assume that there is a constant $m$ such that
\begin{align}
\label{density_cond}
 m \leq \sum_{k,l=1}^K w_{kl} \pi_k \pi_l \leq M   
\end{align}
as we are not interested in the case when all the weights are zero and we could modify $\rho$ otherwise.

The realized and expected degrees are
\begin{align*}
\delta_i:=&\sum_{j=1}^N a_{ij} \\
d_i:=&\sum_{j=1}^N p_{ij}.
\end{align*}
The expected average degree is 
\begin{align}
\label{eq:d}
d:= \frac{1}{N}\sum_{i=1}^{N}d_i=N \rho \sum_{k,l=1}^K w_{kl}\pi_k \pi_l+O(1)=\Theta (N \rho),
\end{align}
where $O(1)$ controls the diagonal terms $i=j$.

We also define $D:=\max_i{d_i}$ the maximal expected degree. This will be realized within one or multiple blocks.

To simplify the notation we sometimes work with the normalized adjacency matrix
\begin{align*}
    \frac{1}{N \rho}A=:B=(b_{ij})_{i,j=1}^N.
\end{align*}
Furthermore, we introduce
$\hat{B}:=\E(B)+\frac{1}{N \rho} \mathcal{D}$ where $\mathcal{D}$ is a diagonal matrix with entries
\begin{align*}
\mathcal{D}_{ii}=\rho w_{kk} \ (i \in I_k).
\end{align*}

We are mostly interested in the densities where $d$ is at least a positive power of $N$, that is, we assume there is an $0<\alpha \leq 1$ such that
\begin{align*}
d \geq N^{\alpha}.    
\end{align*}
For a random variable $X$ we denote its second moment by
\begin{align*}
\|X \|^2:= \E(X^2).
\end{align*}
The indexless norm notation is only used for scalar valued random variables. For $N$ dimensional vectors the $L^p$ norms - and their induced operator norms - are denoted by $\| \cdot \|_p$ and they do not include expectations.

\subsubsection{Spectral properties}
Following \cite[Theorem 3.2]{benaych-georgesSpectralRadiiSparse2020} we have that, given $H=D^{-1/2}(A-\E A)$ and given the parameters 
\[
\kappa= \rho \frac{N\max w_{kl}}{D}=\Theta(1)\qquad q=\sqrt{D}\wedge N^{1/10} \kappa^{-1/9} 
\]
(i.e, if $D\ll N^{1/5}$, $q=\sqrt{D}$, while $q=N^{1/10}\kappa^{-1/9}$ if $D\gg N^{1/5}$), we have 
\[
\frac{\E \|A-\E A\|_2}{\sqrt{D}}\leq 2 + O\left(\frac{\log N}{q} \right).
\]

Note that
\begin{align*}
 q=&\Theta \left(N^{\beta} \right), \ \textit{where} \ \beta:= \min \left\{\frac{\alpha}{2} , \frac{1}{10} \right\}.
\end{align*}

We see that the normalization in the above formula is different from the one adopted in the definition of $B$. But given condition \ref{density_cond} we have that $D=\Theta(N\rho)$, therefore $\E \left(\|B-\E B\|_2 \right)=O \left(\frac{1}{\sqrt{d}} \right)$.

Using the result in \cite[Examples 3.14 and 6.8]{boucheronConcentrationInequalitiesNonasymptotic2013b} we have that 
\begin{equation}
\pr\left( \left | \|H\|_2-\E \|H\|_2\right|>t \right)=\leq 2e^{-cq^2t^2}
\end{equation}
where c is a constant.

Clearly,
\begin{align*}
\|B-\hat{B} \|_2 \leq \|B-\E(B) \|_2+\frac{1}{N \rho}\|\mathcal{D} \|_2  \leq \|B-\E(B) \|_2+\frac{M}{N}. 
\end{align*}
Therefore we have the following: 
\begin{corollary}
\label{c:spectral}
There are constants $c_1,c_2>0$ such that
\begin{align}
\label{eq:spectral_concentration}
\pr \left( \|B-\hat{B} \|_2 > \frac{c_1}{\sqrt{d}} \right) \leq 2 e^{-c_2 N^{2\beta}}.
\end{align}
\end{corollary}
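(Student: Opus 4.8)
The plan is to assemble the three ingredients already established in the excerpt---the bound on $\E\|H\|_2$ inherited from Benaych-Georges et al., the sub-Gaussian concentration $\pr(|\|H\|_2-\E\|H\|_2|>t)\leq 2e^{-cq^2t^2}$, and the deterministic decomposition $\|B-\hat{B}\|_2 \leq \|B-\E(B)\|_2 + M/N$---into a single high-probability statement, with careful bookkeeping of the normalization constants relating $A$, $B$, and $H$.

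First I would rewrite everything in terms of $H$. Since $H=D^{-1/2}(A-\E A)$ while $B-\E B = \frac{1}{N\rho}(A-\E A)$, we have $\|B-\E B\|_2 = \frac{\sqrt{D}}{N\rho}\|H\|_2$, and the prefactor satisfies $\frac{\sqrt{D}}{N\rho}=\Theta(1/\sqrt{N\rho})=\Theta(1/\sqrt{d})$ because $D=\Theta(N\rho)$ and $d=\Theta(N\rho)$. The quoted mean bound gives $\E\|H\|_2 \leq 2 + O(\log N / q)$, and since $q=\Theta(N^{\beta})\to\infty$ this is at most an absolute constant $C_0$ for $N$ large.

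Next I would apply the concentration inequality with a \emph{fixed} threshold, say $t=1$, to obtain $\|H\|_2 \leq C_0+1$ off an event of probability at most $2e^{-cq^2}$. On the complementary good event this yields $\|B-\E B\|_2 = \frac{\sqrt{D}}{N\rho}\|H\|_2 \leq \frac{C'}{\sqrt{d}}$ for a suitable constant $C'$. It then remains to fold in the deterministic correction: since $d\leq N$ we have $\sqrt{d}\leq\sqrt{N}$, so $M/N \leq M/\sqrt{d}$, and hence $\|B-\hat{B}\|_2 \leq (C'+M)/\sqrt{d}=:c_1/\sqrt{d}$ on the good event. Finally, using $q^2=\Theta(N^{2\beta})$, the bad-event probability $2e^{-cq^2}$ is bounded by $2e^{-c_2 N^{2\beta}}$ for an appropriate $c_2>0$, which is exactly the claimed estimate.

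I expect no serious obstacle, as the corollary is essentially a repackaging of the cited results; the only points needing care are the bookkeeping of the normalization constants between the three matrices and the verification that both the $M/N$ term and the constant $C_0+1$ are genuinely of order $1/\sqrt{d}$, so that they can be absorbed into the single constant $c_1$. The mild subtlety is the choice of a \emph{constant} threshold $t$: this is what converts the sub-Gaussian tail $e^{-cq^2t^2}$ into the stretched-exponential rate $e^{-c_2 N^{2\beta}}$ in $N$, whereas letting $t$ shrink with $N$ would weaken the exponent without improving the order of the bound.
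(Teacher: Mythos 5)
Your proposal is correct and follows essentially the same route as the paper: the paper's corollary is likewise obtained by combining the mean bound $\E\|H\|_2 \leq 2 + O(\log N/q)$, the sub-Gaussian concentration of $\|H\|_2$ at scale $q^{-1}$, the identification $D=\Theta(N\rho)=\Theta(d)$ to convert between the $H$ and $B$ normalizations, and the deterministic inequality $\|B-\hat{B}\|_2 \leq \|B-\E(B)\|_2 + M/N$. Your write-up simply makes explicit the constant-threshold choice of $t$ and the bookkeeping that the paper leaves implicit, so there is nothing to add.
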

\subsection{Stochastic dynamics on graphs}\label{sec:stoch}

A vertex can take one state from the finite state space $\mathcal{S}.$ The indicator that vertex $i$ is in state $s$ at time $t$ is denoted by $\xi_{i,s}(t).$ We will assume that the initial conditions $\xi_{i,s}(0)$ are \emph{conditionally }independent for different $i$'s with respect to $G$. That is, we allow the initial conditions to depend on the concrete realization of the random graph $G$, but all other source of randomness for the nodes must be independent.

In order to separate the randomness arising from the dynamics and the one coming from the graph model we will use the conditional probabilities, expectations and variances:
$\pr_{G}(\cdot):=\pr \left( \cdot \left | G \right. \right), \ \E_{G}(\cdot):=\E \left( \cdot \left | G \right. \right)$ and $\operatorname{Var}_G(\cdot):= \operatorname{Var}(\cdot|G)$ respectively.

If vertex $i$ is in state $s$ then via a self interaction it can transition to state $s'$ at rate 
$$q_{s \to s'}.$$
Furthermore, if vertex $j$ is in state $\tilde{s}$ and vertex $i$ is in state $s$ than vertex $j$ makes vertex $i$ transition to state $s'$ at rate
$$q_{\tilde{s};s  \to s'}\frac{a_{ij}}{N \rho}.$$ Note that
\begin{align*}
\frac{1}{N \rho}\sum_{j=1}^{N}a_{ij}=\frac{\delta_i}{N \rho}\overset{\eqref{eq:d}}{=} \Theta \left( \frac{\delta_i}{d} \right)
\end{align*}
which, in our framework, remains $O(1)$ (see Lemma \ref{l:degree_bound}). Therefore, with this normalization, we ensure that in this time scale there are at most a constant amount of transitions per unit time.

We make use of the convention
\begin{align*}
q_{ s \to s}:=&-\sum_{\substack{s' \in \mathcal{S} \\ s' \neq s}}q_{s \to s'} \\
q_{\tilde{s}; s \to s}:=&-\sum_{\substack{s' \in \mathcal{S} \\ s' \neq s}}q_{\tilde{s}; s \to s'}    
\end{align*}
to denote the total (negative) rate of vertex $i$ leaving state $s$.

By using the infinitesimal generator of the stochastic process we get the following identity
\begin{align}\label{eq:ODE_non_closed}
\frac{\dd}{\dd t} \E_G \left(\xi_{i,s}(t) \right) =&\sum_{s' \in \mathcal{S}}q_{s'\to s}\E_{G} \left(\xi_{i,s'}(t) \right)\\
&+\sum_{s', \tilde{s} \in \mathcal{S}}q_{\tilde{s}, s' \to s}  \frac{1}{N \rho}\sum_{j} a_{ij} \E_G \left( \xi_{i,s'}(t) \xi_{j,\tilde{s}}(t) \right).
\end{align}

One can close \eqref{eq:ODE_non_closed} by artificially imposing independence between the variables which results in
\begin{align}
\label{eq:NIMFA}
\begin{split}
   \frac{\dd}{\dd t} z_{i,s}(t) =& \sum_{s' \in \mathcal{S}}q_{s'\to s} z_{i,s'}(t)\\
   &+\sum_{s', \tilde{s} \in \mathcal{S}}q_{\tilde{s}, s' \to s}  \frac{1}{N \rho}\sum_{j} a_{ij}  z_{i,s'}(t) z_{j,\tilde{s}}(t), \\
   z_{i,s}(0)=&\pr_G(\xi_{i,s}(0)=1)
\end{split}
\end{align}
If the correlation between indicators are negligible than we should have $\pr_G\left(\xi_{i,s}(t)=1 \right) \approx z_{i,s}(t).$ Such method is called quenched mean-field approximation 
or N-Interwined Mean-Field Approximaton (NIMFA) \cite{vanmieghem2009virus}. It is more convenient to write \eqref{eq:NIMFA} in the form:
\begin{align}
\label{eq:NIMFA2}
\begin{split}
   \frac{\dd}{\dd t} z_{i,s}(t) =& \sum_{s' \in \mathcal{S}}q_{s'\to s} z_{i,s'}(t)\\
   &+\sum_{s', \tilde{s} \in \mathcal{S}}q_{\tilde{s}, s' \to s}    z_{i,s'}(t) \left(Bz_{\cdot,\tilde{s}}(t) \right)_{i}, \\
   z_{i,s}(0)=&\pr_G(\xi_{i,s}(0)=1)
\end{split}
\end{align}
where $z_{\cdot,s}(t):=\left(z_{i,s}(t) \right)_{i=1}^{N}.$

In real world settings, neither the underlying graph nor the states of individual vertices are known; however, we may observe some mesoscopic quantities on the partition of the graph that might correspond to regions or age groups among other things, which justifies the class of graph models we chose.

The expected number of edges between a vertex $i \in I_k$ and the partition $I_l$ is
\begin{align*}
\sum_{j \in I_l}p_{ij}=N \rho w_{kl} \pi_l.
\end{align*}
Replace $a_{ij}$ with the expectation $p_{ij}$ ( with $p_{ii}=\rho w_{kk}$ for $i \in I_k$ in this particular case) in \eqref{eq:NIMFA} and assume $z_{i,s}(t)=x_{k,s}(t)$ for all $i \in I_k.$ Then the ODE system for $x_{k,s}(t)$ becomes
\begin{align}
\label{eq:x}
\begin{split}
   \frac{\dd}{\dd t} x_{k,s}(t) =& \sum_{s' \in \mathcal{S}}q_{s'\to s} x_{k,s'}(t)\\
   &+\sum_{s', \tilde{s} \in \mathcal{S}}q_{\tilde{s}, s' \to s}  \sum_{j} w_{kl}\pi_l  x_{k,s'}(t) x_{l,\tilde{s}}(t), \\
   x_{k,s}(0)=& \frac{1}{N_k}\sum_{i \in I_k} \pr_G \left(\xi_{i,s}(0)=1 \right).
\end{split}
\end{align}
We will call $x_{k,s}(t)$ the Block Homogenous Mean-Field Approximation (BHMFA).

Our main goal is to give a quantitative error bound on the error between 
\begin{align}
\label{eq:xi_bar}
\bar{\xi}_{k,s}(t):=\frac{1}{N_k}\sum_{i \in I_k} \xi_{i,s}(t)
\end{align}
and $x_{k,s}(t)$.

\subsubsection{Examples}\label{subsec:examples}

\subsubsection*{The SIR process}
One of the main motivation for studying Markov processes on large graphs is to understand the spread of diseases on social networks. 

A frequently used model is the so-called SIR process. The state space is $\mathcal{S}=\{S,I,R\}$ where the initials stand for susceptible, infected and recovered.

There are two kinds of transitions in the system. An infected vertex can infect a susceptible neighbor at rate $\frac{\beta}{N \rho}$ (unrelated to the $\beta$ in Corollary \ref{c:spectral}), while said infected node recovers at rate $\gamma$. These translate to $q_{I, S \to I}=\beta$ and $q_{I \to R}=\gamma.$

For this process NIMFA takes the form of
\begin{align*}
    \frac{\dd}{\dd t}z_{i,S}(t)=&-\frac{\beta}{N \rho}z_{i,S}(t) \sum_{j}a_{ij}z_{j,I}(t) \\
    \frac{\dd}{\dd t}z_{i,I}(t)=&\frac{\beta}{N \rho}z_{i,S}(t) \sum_{j}a_{ij}z_{j,I}(t)-\gamma z_{i,I}(t) \\
    \frac{\dd}{\dd t}z_{i,R}(t)=& \gamma z_{i,I}(t),
\end{align*}
while BHMFA is
\begin{align*}
    \frac{\dd}{\dd t}x_{k,S}(t)=&-\beta x_{k,S}(t)\sum_{l}w_{kl}\pi_lx_{l,I}(t) \\
    \frac{\dd}{\dd t}x_{k,I}(t)=&\beta x_{k,S}(t)\sum_{l}w_{kl}\pi_lx_{l,I}(t)-\gamma x_{k,I}(t) \\
    \frac{\dd}{\dd t}x_{k,R}(t)=& \gamma x_{k,I}(t).
\end{align*}
\subsubsection*{The catalyst process}
Contrary to previous example, we introduce this simple stochastic process mainly for theoretical purposes. It has the advantage of being an integrable system while having enough complexity to retain some information about the structure of the underlying graph.

The state space is $S=\{a,b,c \}$ from which we call state $c$ the catalyst. There is only one kind of transition where a vertex in the catalyst state can make one of its neighbors jump from state $a$ to $b$ ate rate $\frac{1}{N \rho},$ that is $q_{c,a \to b}=1.$

The corresponding ODEs are
\begin{align*}
    \frac{\dd }{\dd t}z_{i,a}(t)=&-\frac{1}{N \rho}z_{i,a}(t) \sum_{j}a_{ij}z_{j,c}(t)  \\
    \frac{\dd }{\dd t}z_{i,b}(t)=&\frac{1}{N \rho}z_{i,a}(t) \sum_{j}a_{ij}z_{j,c}(t) \\
    \frac{\dd}{\dd t}z_{i,c}(t)=&0,
\end{align*}
and
\begin{align*}
    \frac{\dd}{\dd t}x_{k,a}(t)=&-x_{k,a}(t) \sum_{l}w_{kl} \pi_l x_{l,c}(t) \\
    \frac{\dd}{\dd t}x_{k,b}(t)=&x_{k,a}(t) \sum_{l}w_{kl} \pi_l x_{l,c}(t) \\
    \frac{\dd}{\dd t}x_{k,c}(t)=&0.
\end{align*}

It is easy to see that the catalyst variables remain constants and state $a$ evolves like
\begin{align}
\label{eq:catalyst}
\begin{split}
    z_{i,a}(t)=&z_{i,a}(0) \exp \left(- \left( \frac{1}{N \rho} \sum_{j}a_{ij}z_{j,c}(0) \right)t \right) \\
    x_{k,a}(t)=&x_{k,a}(0) \exp \left(-\left(\sum_{l}w_{kl} \pi_l x_{l,c}(0) \right)t \right).
\end{split}
\end{align}

\subsubsection*{The degree process}
The degree process is even simpler than the catalyst process. Again we introduce this process for theoretical purposes. The state space is $S=\{a,b \}.$ Any vertex, regardless their state  can make its neighbors jump from state $a$ to $b$ at rate $\frac{1}{N \rho}$ making $q_{a,a \to b}=q_{b,a \to b}=1.$ In total, a vertex in state $a$ transitions at rate $\frac{1}{N \rho} \delta_i.$

Since the transition rate of vertex $i$ depends only on its own degree, the evolution of the vertices are conditionally independent with respect to $G$. It is easy to check that
\begin{align}
\label{eq:degree_process}
\pr_{G}(\xi_{i,a}(t)=1)=\E_G \left(\xi_{i,a}(t) \right)=z_{i,a}(t)=&z_{i,a}(0)e^{-\frac{\delta_i}{N \rho}t} \\
x_{k,a}(t)=&\bar{z}_{k,a}(0)e^{-t}.
\end{align}

\section{Results}\label{sec:results}

For the rest of the paper the constants of $O(\cdot), \Omega(\cdot) $ and $\Theta(\cdot)$ may depend on $t,q_{s' \to s}, q_{\tilde{s};s' \to s}$ and other fixed constants.
Recall $x_{k,s}(t)$ and $\bar{\xi}_{k,s}(t)$ from \eqref{eq:x} and \eqref{eq:xi_bar} respectively.

Firstly, we deal with general, conditionally independent, $G$ measurable initial conditions.

\begin{theorem}
\label{t:general_upper_bound}
For the general class of processes introduced in \ref{sec:stoch} we have:
\begin{align}
\label{eq:t:general_upper_bound}
   \sup_{0 \leq \tau \leq t} \max_{\substack{1 \leq k \leq K \\ s \in \mathcal{S}}} \|\bar{\xi}_{k,s}(\tau)-x_{k,s}(\tau) \|=O \left( \frac{1}{\sqrt{d}} \right).  
\end{align}
\end{theorem}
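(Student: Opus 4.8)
The plan is to introduce the block average of the quenched NIMFA solution, $\bar z_{k,s}(t):=\frac{1}{N_k}\sum_{i\in I_k}z_{i,s}(t)$, as an intermediate object and split the error by the triangle inequality,
\begin{align*}
\|\bar\xi_{k,s}(\tau)-x_{k,s}(\tau)\| \le \|\bar\xi_{k,s}(\tau)-\bar z_{k,s}(\tau)\| + \|\bar z_{k,s}(\tau)-x_{k,s}(\tau)\|.
\end{align*}
The first term is the quenched NIMFA error, which I would control directly through the results of \cite{keligerConcentrationMeanField2024a}, giving $O\!\left(\frac{1}{\sqrt N}+\frac{1}{d}\right)$; since $1\le d\le N$, both summands are $O\!\left(\frac{1}{\sqrt d}\right)$, so this term is harmless and a crude bound would already suffice. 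All the real work then concerns the second, $G$-conditionally analytic term, which is the genuine \emph{annealing} error.

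For the annealing term I would introduce the \emph{annealed NIMFA} $y_{i,s}(t)$, defined exactly as in \eqref{eq:NIMFA2} but with $B$ replaced by the block-constant matrix $\hat B$ and with the same initial data $y_{i,s}(0)=z_{i,s}(0)$. The structural observation that drives the argument is that $\hat B_{ij}=w_{kl}/N$ for every $i\in I_k$, $j\in I_l$ (this is precisely why the diagonal correction $\mathcal D$ is absorbed into $\hat B$), so that $(\hat B y_{\cdot,\tilde s})_i=\sum_l w_{kl}\pi_l\,\bar y_{l,\tilde s}$ depends on $i$ only through its block. Averaging the $y$-equation over $i\in I_k$ makes this interaction factor pull out of the sum, and one checks that $\bar y_{k,s}(t):=\frac{1}{N_k}\sum_{i\in I_k}y_{i,s}(t)$ solves \emph{exactly} the BHMFA system \eqref{eq:x} with the matching initial condition $x_{k,s}(0)$. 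By uniqueness of the ODE solution this yields the exact identity $\bar y_{k,s}(t)=x_{k,s}(t)$, so the annealing error reduces to $\bar z_{k,s}-\bar y_{k,s}$, i.e.\ to comparing two NIMFA flows that differ only in their coupling matrix.

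That comparison I would handle by a Grönwall argument on $w_{\cdot,s}(t):=z_{\cdot,s}(t)-y_{\cdot,s}(t)$, with $w_{\cdot,s}(0)=0$. Writing the difference of the two nonlinearities as $z_{i,s'}\big((B-\hat B)z_{\cdot,\tilde s}\big)_i+z_{i,s'}(\hat B w_{\cdot,\tilde s})_i+w_{i,s'}(\hat B y_{\cdot,\tilde s})_i$, the last two pieces are Lipschitz in $w$ with $O(1)$ constants (since $\|\hat B\|_2=O(1)$ by its bounded row and column sums, and all coordinates lie in $[0,1]$), while the first is a source term bounded in $\ell^2$ by $\|B-\hat B\|_2\,\|z_{\cdot,\tilde s}\|_2\le\sqrt N\,\|B-\hat B\|_2$. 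Grönwall then gives $\|w_{\cdot,s}(t)\|_2=O\!\left(\sqrt N\,\|B-\hat B\|_2\right)$ uniformly on $[0,t]$, and Cauchy–Schwarz over the block yields $|\bar z_{k,s}-\bar y_{k,s}|\le N_k^{-1/2}\|w_{\cdot,s}\|_2=O\!\left(\|B-\hat B\|_2\right)$. I would then convert this to the claimed $L^2$ bound via Corollary \ref{c:spectral}: on the event $\{\|B-\hat B\|_2\le c_1/\sqrt d\}$ the error is $O(1/\sqrt d)$, while its complement has probability at most $2e^{-c_2N^{2\beta}}$ and contributes negligibly because every variable is bounded by $1$.

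I expect the main obstacle to be the Grönwall step: one must set up the vector-valued estimate so that the $\sqrt N$ loss from $\|z_{\cdot,\tilde s}\|_2$ is exactly cancelled by the $N_k^{-1/2}$ gain from block-averaging (using $N_k\ge c_{block}N$), while keeping all constants uniform in $\tau\le t$, in $k$, and in $s$. The spectral input $\|B-\hat B\|_2=O(1/\sqrt d)$ is precisely what balances these two powers of $N$ and produces the $1/\sqrt d$ rate; any slack there would degrade the final exponent. By contrast, the exact identity $\bar y_{k,s}=x_{k,s}$ and the probabilistic passage from Corollary \ref{c:spectral} to an $L^2$ statement are clean once the boundedness of all quantities is invoked.
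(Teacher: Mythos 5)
Your proposal is correct and takes essentially the same route as the paper's proof: the same triangle-inequality split through $\bar z_{k,s}$ (quenched term handled by the results of \cite{keligerConcentrationMeanField2024a}), the same annealed intermediary (your $y_{i,s}$ is the paper's $\hat z_{i,s}$) with the exact identity $x_{k,s}=\frac{1}{N_k}\sum_{i\in I_k}\hat z_{i,s}(t)$ of Remark \ref{r:B_hat}, the same Gr\"onwall comparison driven by $\|B-\hat B\|_2=O(1/\sqrt{d})$ from Corollary \ref{c:spectral}, and the same dismissal of the superpolynomially unlikely bad event using boundedness. The only difference is cosmetic: you split the nonlinearity so that the $O(1)$ prefactor sits on $(\hat B y_{\cdot,\tilde s})_i$, whereas the paper puts it on $(Bz_{\cdot,\tilde s})_i$ and therefore intersects with the bounded-degree event of Lemma \ref{l:degree_bound} -- an event you implicitly need anyway when invoking the quenched estimates for the first term.
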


If we do not assume anything about the initial conditions, then the upper bound \eqref{eq:t:general_upper_bound} can be achieved as a lower bound for the catalyst process on a single community graph, i.e.  on an Erdős-Rényi graph, making it in some sense tight. 
\begin{theorem}
 \label{t:general_lower_bound}
 For the catalyst process introduced in \ref{subsec:examples} on an Erdős-Rényi graph with density $\rho<1-c$, for some $c>0$, we can choose  conditionally independent, $G$ measurable initial conditions such that
 \begin{align}
     \|\bar{\xi}_{1,a}(1)-x_{1,a}(1) \|=\Theta \left( \frac{1}{\sqrt{d}} \right).
 \end{align}
\end{theorem}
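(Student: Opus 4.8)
The plan is to exhibit a single-community (Erdős–Rényi) instance together with \emph{degree-correlated} initial conditions for which the block-homogeneous approximation $x_{1,a}$ already misses the truth \emph{in expectation} by $\Theta(1/\sqrt d)$; since $\|Y\|\ge|\E Y|$ for any scalar $Y$, this forces $\|\bar\xi_{1,a}(1)-x_{1,a}(1)\|=\Omega(1/\sqrt d)$, and the matching $O(1/\sqrt d)$ from Theorem~\ref{t:general_upper_bound} then yields $\Theta(1/\sqrt d)$. Concretely, I would take $K=1$, $w_{11}=1$, $\pi_1=1$, so that edges are present independently with probability $\rho$, $\delta_i\sim\mathrm{Bin}(N-1,\rho)$, and $d=\Theta(N\rho)$ by \eqref{eq:d}. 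Fixing a constant $\theta\in(0,1)$ and $\mu:=(N-1)\rho=\E\delta_1$, the initial conditions are: conditionally on $G$, each vertex independently becomes a catalyst with probability $\theta$ and otherwise starts in state $a$ exactly when its degree exceeds the mean, i.e. $z_{i,c}(0)=\theta$ and $z_{i,a}(0)=(1-\theta)\1{\delta_i>\mu}$. These are conditionally independent and $G$-measurable (through $\delta_i$), the catalyst marginal is homogeneous, and the $a$-marginal is positively correlated with degree — which is precisely the heterogeneity the block-homogeneous approximation cannot see.

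The key simplification I would exploit is that the catalyst process is \emph{exactly solvable}: conditionally on $G$ and the initial configuration a vertex started in $a$ leaves $a$ at the fixed rate $\frac1{N\rho}\sum_j a_{ij}\xi_{j,c}(0)$, so by \eqref{eq:catalyst} and conditional independence of the initial states,
\begin{align*}
\E_G\big(\xi_{i,a}(1)\big)=z_{i,a}(0)\prod_{j:\,a_{ij}=1}\big(1-\theta\varepsilon\big)=z_{i,a}(0)\,(1-\theta\varepsilon)^{\delta_i},\qquad \varepsilon:=1-e^{-1/(N\rho)}.
\end{align*}
On the other hand $x_{1,a}(1)=x_{1,a}(0)e^{-\theta}$ with $x_{1,a}(0)=\frac1N\sum_i z_{i,a}(0)$ from \eqref{eq:x}. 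Subtracting and using exchangeability of the degrees, the mean error collapses to a single truncated binomial expectation,
\begin{align*}
\E\big(\bar\xi_{1,a}(1)-x_{1,a}(1)\big)=(1-\theta)\,\E\Big[\1{\delta_1>\mu}\big((1-\theta\varepsilon)^{\delta_1}-e^{-\theta}\big)\Big].
\end{align*}

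To evaluate this I would set $g(\delta):=(1-\theta\varepsilon)^{\delta}-e^{-\theta}=e^{-\lambda\delta}-e^{-\theta}$ with $\lambda:=-\log(1-\theta\varepsilon)=\tfrac{\theta}{N\rho}(1+O(1/d))$, so that $\lambda\mu=\theta+O(1/d)$ and hence $g(\mu)=O(1/d)$, while $g'(\mu)=-e^{-\theta}\lambda(1+o(1))$. A first-order expansion of $g$ around $\mu$ over the typical window $|\delta_1-\mu|\lesssim\sqrt{N\rho}$, together with the exact identity $\E[(\delta_1-\mu)\1{\delta_1>\mu}]=\tfrac12\E|\delta_1-\mu|$, gives
\begin{align*}
\E\Big[\1{\delta_1>\mu}\,g(\delta_1)\Big]=-\tfrac12 e^{-\theta}\lambda\,\E|\delta_1-\mu|+O(1/d)=-\Theta\!\left(\tfrac{1}{N\rho}\sqrt{N\rho}\right)+O(1/d)=-\Theta(1/\sqrt d).
\end{align*}
Here $\E|\delta_1-\mu|=\Theta(\sqrt{N\rho(1-\rho)})=\Theta(\sqrt d)$ is exactly where the hypothesis $\rho<1-c$ enters, as it guarantees $\mathrm{Var}(\delta_1)=(N-1)\rho(1-\rho)=\Theta(d)$ and hence non-degenerate degree fluctuations. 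Since the signal is of strictly larger order $\Theta(1/\sqrt d)$ than the accumulated remainder $O(1/d)$, it cannot cancel, and $\|\bar\xi_{1,a}(1)-x_{1,a}(1)\|\ge|\E(\cdot)|=\Theta(1/\sqrt d)$.

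The main obstacle is the penultimate step: controlling the first-order expansion of $g$ uniformly well enough that the genuine $\Theta(1/\sqrt d)$ term dominates every $O(1/d)$ correction — those arising from $\varepsilon$ versus $1/(N\rho)$, from the quadratic remainder $g''$, and from replacing $(1-\theta\varepsilon)^{\mu}$ by $e^{-\theta}$. This is a careful but routine binomial estimate: one splits the range of $\delta_1$ into a $\Theta(\sqrt{N\rho}\,\mathrm{polylog})$-window around $\mu$, where $\lambda s=O(1/\sqrt d)$ makes the linearization valid and the squared remainder integrates to $O(\lambda^2\mathrm{Var}(\delta_1))=O(1/d)$, while the complementary large-deviation event is handled by a Chernoff bound against the trivial bound $|g|\le 2$. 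Combined with the classical mean-absolute-deviation asymptotics $\E|\delta_1-\mu|\sim\sqrt{2\rho(1-\rho)N/\pi}$ for the binomial, this closes the lower bound.
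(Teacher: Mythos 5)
Your proposal is correct, but it takes a genuinely different route from the paper's proof. The paper argues through NIMFA: its Lemma~\ref{l:lower_bound} linearizes the exponential in \eqref{eq:catalyst} to lower-bound $\left|\bar z_a(1)-x_a(1)\right|$ by the quadratic form $\frac{1}{N}\left(z_{\cdot,a}(0)\right)^{\intercal}(B-\hat B)z_{\cdot,c}(0)$, chooses the initial data as the indicator of a set $H$ of large discrepancy (so the quadratic form becomes the modularity-type quantity $e(H)-\frac{d}{N}|H|^2$, bounded below by $\sqrt{p(1-p)}\,N^{3/2}/80$ via a Bollob\'as--Scott type result), and then transfers from $\bar z_a(1)$ to $\bar\xi_a(1)$ at a cost $O\left(\max\left\{\frac{1}{d},\frac{1}{\sqrt N}\right\}\right)$ using Theorems 4.4 and 4.14 of \cite{keligerConcentrationMeanField2024a}. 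You instead take degree-correlated initial conditions, exploit the fact that the \emph{stochastic} catalyst process (not merely its NIMFA) is exactly solvable --- $\E_G\left(\xi_{i,a}(1)\right)=z_{i,a}(0)\left(1-\theta\varepsilon\right)^{\delta_i}$, a product of Bernoulli moment generating functions --- and then need only $\|Y\|\geq|\E Y|$, which reduces the whole problem to a truncated moment of a single $\operatorname{Bin}(N-1,\rho)$ variable. This buys self-containedness: you avoid both external ingredients of the paper (the graph discrepancy theorem and the companion paper's concentration bounds), replacing them with the classical binomial mean-absolute-deviation asymptotics; your hypothesis $\rho<1-c$ enters exactly where the paper's does, namely to keep the edge variance non-degenerate ($\operatorname{Var}(\delta_1)=\Theta(d)$ for you, $\sqrt{p(1-p)}=\Omega(\sqrt{\rho})$ for the paper). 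What the paper's route buys instead is structural information: the link between worst-case initial data and modularity, and the statement that NIMFA itself ($\bar z_a$), not only the true annealed average, departs from the block approximation. One last remark: the step you flag as the main obstacle is actually immediate --- since $g''(\delta)=\lambda^2e^{-\lambda\delta}\leq\lambda^2$ for all $\delta\geq 0$, the global second-order Taylor bound gives
\begin{align*}
\E\left[\1{\delta_1>\mu}\left|g(\delta_1)-g(\mu)-g'(\mu)(\delta_1-\mu)\right|\right]\leq \tfrac{\lambda^2}{2}\operatorname{Var}(\delta_1)=O\left(\tfrac{1}{d}\right),
\end{align*}
so no window-splitting or Chernoff argument is needed at all.
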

 Secondly, we show how the error term changes when we restrict ourself to conditionally independent, $G$ measurable initial conditions that are homogeneous enough.
 \begin{theorem}
   \label{t:general_homogeneous_upper_bound}
   Let $\mathcal{E}_0$ denote the event
   \begin{align}\label{eq:hom_condition}
   \mathcal{E}_0= \left\{\forall s \in \mathcal{S}, \ \frac{1}{N}\sum_{k} \sum_{i \in I_k} \left(z_{i,s}(0)-\bar{z}_{k,s}(0) \right)^2 \leq \frac{c_0}{d}\right \}.
\end{align}
Assume $\pr \left( \mathcal{E}_0^c \right)$ decays superpolinomially, that is for any $r \in \mathbb{Z}^{+}$ one has $N^r \pr \left( \mathcal{E}_0^c \right) \to 0$ as $N \to \infty.$ Then for any process
\begin{align}
  \label{eq:general_homogeneous_upper_bound}
  \sup_{0 \leq \tau \leq t} \max_{\substack{1 \leq k \leq K \\ s \in \mathcal{S}}} \|\bar{\xi}_{k,s}(\tau)-x_{k,s}(\tau) \|=O \left( \max \left\{\frac{1}{d}, \frac{1}{\sqrt{N}} \right \}\right).  
\end{align}
 \end{theorem}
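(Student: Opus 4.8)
The plan is to compare the empirical block averages $\bar{\xi}_{k,s}$ with the BHMFA $x_{k,s}$ through the intermediate object $\bar{z}_{k,s}(\tau):=\frac{1}{N_k}\sum_{i\in I_k}z_{i,s}(\tau)$, the block average of the quenched NIMFA solution, and to split the error by the triangle inequality in the $\|\cdot\|$-norm,
\begin{equation*}
\|\bar{\xi}_{k,s}(\tau)-x_{k,s}(\tau)\|\le \|\bar{\xi}_{k,s}(\tau)-\bar{z}_{k,s}(\tau)\|+\|\bar{z}_{k,s}(\tau)-x_{k,s}(\tau)\|.
\end{equation*}
The first term is the genuine mean-field (closure plus fluctuation) error between the stochastic process and NIMFA; by the results of \cite{keligerConcentrationMeanField2024a} it is $O(1/\sqrt{N}+1/d)$ uniformly on $[0,t]$, with no homogeneity assumption. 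The whole point is therefore to show that the second, \emph{annealing}, term --- which for general initial data is responsible for the $1/\sqrt{d}$ of Theorem \ref{t:general_upper_bound} --- drops to $O(1/d)$ once we are on $\mathcal{E}_0$.

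To this end I would first record an exact algebraic reformulation. Embedding $x_{k,s}$ into $\mathbb{R}^N$ as the block-constant vector $x_{\cdot,s}$ with $x_{i,s}=x_{k,s}$ for $i\in I_k$, a direct computation using $\hat{B}_{ij}=w_{kl}/N$ shows that $(\hat{B}v)_i=\sum_l w_{kl}\pi_l\bar{v}_l$ depends on $v$ only through its block averages; consequently $x_{\cdot,s}$ solves \eqref{eq:NIMFA2} \emph{verbatim} but with $B$ replaced by $\hat{B}$ and with block-constant initial data $x_{i,s}(0)=\bar{z}_{k,s}(0)$. Writing $e_{\cdot,s}:=z_{\cdot,s}-x_{\cdot,s}$ and letting $P$ denote the orthogonal block-averaging projection, the two crucial structural facts are: (i) $(I-P)e_{\cdot,s}$ equals the within-block fluctuation $f_{\cdot,s}:=(I-P)z_{\cdot,s}$ of the NIMFA solution (since $x$ is block-constant), while $Pe_{\cdot,s}(0)=0$ because the initial block averages coincide; (ii) after projecting the difference equation with $P$, the drift of $Pe$ closes on itself --- every linear term factors through $\hat{B}$ or through a block-constant multiplier, so it sees $f$ only through the forcing $PF_{\cdot,s}$, where $F_{\cdot,s}:=\sum_{s',\tilde{s}}q_{\tilde{s},s'\to s}\,z_{\cdot,s'}\odot\big((B-\hat{B})z_{\cdot,\tilde{s}}\big)$ and $u\odot v$ is the entrywise product. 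Thus $Pe_{\cdot,s}$ obeys a finite-dimensional linear ODE with bounded coefficients, zero initial datum, and source $PF_{\cdot,s}$, and Grönwall reduces everything to estimating the second moment $\E\|PF_{\cdot,s}(\tau)\|_2^2$.

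Two inputs control the forcing, both on the intersection of $\mathcal{E}_0$ with the spectral event of Corollary \ref{c:spectral}, on which $\|B-\hat{B}\|_2\le c_1/\sqrt{d}$. The first is a \emph{propagation of homogeneity}: since $(I-P)(\hat{B}z_{\cdot,\tilde{s}})=0$, the fluctuation $f_{\cdot,s}$ itself solves a linear ODE whose source is $(I-P)\big(z_{\cdot,s'}\odot((B-\hat{B})z_{\cdot,\tilde{s}})\big)$, of Euclidean norm at most $\|B-\hat{B}\|_2\|z_{\cdot,\tilde{s}}\|_2=O(\sqrt{N/d})$; as $\|f_{\cdot,s}(0)\|_2^2=N\cdot\frac{1}{N}\sum_k\sum_{i\in I_k}(z_{i,s}(0)-\bar{z}_{k,s}(0))^2\le c_0 N/d$ on $\mathcal{E}_0$, Grönwall keeps $\|f_{\cdot,s}(\tau)\|_2=O(\sqrt{N/d})$ throughout $[0,t]$. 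The second is a splitting of the block-averaged forcing $\frac{1}{N_k}\sum_{i\in I_k}z_{i,s'}\eta_{i,\tilde{s}}$, with $\eta_{\cdot,\tilde{s}}:=(B-\hat{B})z_{\cdot,\tilde{s}}$, into $\bar{z}_{k,s'}\cdot\frac{1}{N_k}\langle\one_{I_k},(B-\hat{B})Pz_{\cdot,\tilde{s}}\rangle$ plus remainders that pair $\eta$ against $f$. Each remainder is bounded by Cauchy--Schwarz using $\|f_{\cdot,s'}\|_2=O(\sqrt{N/d})$ and $\|\eta_{\cdot,\tilde{s}}\|_2=O(\sqrt{N/d})$, giving a per-block contribution of order $\frac{1}{N_k}\sqrt{N/d}\cdot\sqrt{N/d}=O(1/d)$. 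This is exactly where homogeneity is decisive: without $\mathcal{E}_0$ one only has $\|f\|_2=O(\sqrt{N})$ and the same estimate degrades to $1/\sqrt{d}$, recovering Theorem \ref{t:general_upper_bound}.

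The one term that resists a pointwise bound, and which I expect to be the main obstacle, is the leading piece $\frac{1}{N_k}\langle\one_{I_k},(B-\hat{B})Pz_{\cdot,\tilde{s}}\rangle$: here Cauchy--Schwarz against the spectral bound only yields $O(1/\sqrt{d})$, whereas we need its second moment to be $O(1/(Nd))$. The gain must come from cancellation: writing $Pz_{\cdot,\tilde{s}}=\sum_l\bar{z}_{l,\tilde{s}}\one_{I_l}$, this quantity is, up to negligible diagonal corrections, the linear form $\frac{1}{N_k N\rho}\sum_l\bar{z}_{l,\tilde{s}}\sum_{i\in I_k,\,j\in I_l}(a_{ij}-p_{ij})$ in the $\sim N^2$ independent centered entries $a_{ij}-p_{ij}$, whose variance is $O(1/(Nd))$ when the coefficients $\bar{z}_{l,\tilde{s}}$ are frozen. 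The difficulty is precisely that $\bar{z}_{l,\tilde{s}}(\tau)$ is itself a function of $G$, hence correlated with the very entries being summed; the remedy is that the NIMFA flow depends on any single edge only at scale $1/(N\rho)=O(1/d)$, so an edge-wise martingale (Efron--Stein) argument shows the form concentrates as if the coefficients were independent, keeping the second moment $O(1/(Nd))$. Feeding $\E\|PF_{\cdot,s}(\tau)\|_2^2=\sum_k N_k\,\E[(\cdot)^2]=O(N/d^2)$ into the Grönwall bound for $Pe$ gives $\E\|Pe_{\cdot,s}(t)\|_2^2=O(N/d^2)$, whence $\max_k\|\bar{z}_{k,s}(t)-x_{k,s}(t)\|^2\le\frac{1}{c_{block}N}\E\|Pe_{\cdot,s}(t)\|_2^2=O(1/d^2)$. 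Since $\mathcal{E}_0$ and the event of Corollary \ref{c:spectral} both fail with superpolynomially small probability, the complementary event contributes only $O(1)\cdot o(N^{-r})$ to every second moment and is negligible. Combining the annealing bound $O(1/d)$ with the closure bound $O(1/\sqrt{N}+1/d)$, uniformly over $\tau$ and the finitely many $(k,s)$, yields \eqref{eq:general_homogeneous_upper_bound}.
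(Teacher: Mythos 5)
Your proposal is correct and, at its core, it is the same proof as the paper's: split off the stochastic closure error $\|\bar{\xi}_{k,s}-\bar{z}_{k,s}\|=O(1/\sqrt{N}+1/d)$ via the cited results of \cite{keligerConcentrationMeanField2024a}, observe that $x_{\cdot,s}$ is the block-constant solution of the $\hat{B}$-dynamics, run Gr\"onwall on the block-averaged error (your $Pe$, the paper's $h_{k,s}$) whose drift closes on itself with bounded coefficients and whose forcing comes from $B-\hat{B}$, and bound the leading forcing term by the variance of centered sums of independent edge variables, which is $O\left(1/(Nd)\right)$. The only structural difference is in the intermediate control of $z$: you decompose $z=Pz+f$ and prove a propagation-of-homogeneity bound $\|f(\tau)\|_2=O\left(\sqrt{N/d}\right)$ directly, while the paper introduces two auxiliary solutions ($\hat{z}$: matrix $\hat{B}$, same initial data; $\tilde{z}$: matrix $\hat{B}$, block-averaged initial data, which equals $x$) and bounds $z-\tilde{z}=\Delta+\Delta^{*}$ through Lemmas \ref{l:Delta} and \ref{l:Delta*}. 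The two devices are interchangeable ($f=(I-P)(z-\tilde z)$, so your bound follows from theirs and vice versa), and both serve the same purpose: on $\mathcal{E}\cap\mathcal{E}_0$ the solution stays $\ell^2$-close to a block-constant vector, which is what makes all cross terms against $B-\hat B$ of order $1/d$ rather than $1/\sqrt{d}$.

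The one point to correct is the step you single out as the main obstacle: there is no correlation problem in the leading piece, hence nothing for your Efron--Stein argument to do. Writing $S_{kl}:=\frac{1}{N_k}\sum_{i\in I_k,\,j\in I_l}(b_{ij}-\hat{b}_{ij})$, the coefficients satisfy $0\le\bar{z}_{l,\tilde{s}}(\tau)\le 1$ and there are only $K$ (a fixed constant) of them, so pointwise and uniformly in $\tau$ one has $\bigl(\sum_{l}\bar{z}_{l,\tilde{s}}(\tau)S_{kl}\bigr)^2\le K\sum_{l}S_{kl}^2$; taking expectations reduces everything to $\sum_l\E\left(S_{kl}^2\right)=O\left(1/(Nd)\right)$, a pure independent-edge variance computation that is completely insensitive to how the coefficients depend on $G$. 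This is exactly how the paper bounds its term $\eta_{k,s',\tilde{s}}$, whose coefficients $x_{k,s'}(\tau)x_{l,\tilde{s}}(\tau)$ are equally $G$-measurable and equally bounded by one. The edge-wise martingale/Efron--Stein argument is thus unnecessary --- which is fortunate, since it is the only step of your proof that is asserted rather than carried out, and it is not clear it would even deliver the claimed $O\left(1/(Nd)\right)$ (a crude Efron--Stein accounting over the $\Theta(N^2)$ edges gives $O\left(1/d^2\right)$ for the variance, and the mean of the form would still need separate treatment). Deleting that detour and replacing it with the trivial coefficient bound turns your sketch into a complete proof.
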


Lastly, we show that for homogeneous initial conditions the bound \eqref{eq:general_homogeneous_upper_bound} is also tight.

\begin{theorem}
 \label{t:homogeneous_lower_bound}
 Take an Erdős-Rényi graph with density $\rho$ and the degree process ( introduced in \ref{subsec:examples}) with initial conditions $z_{i,a}(0)=1$ for all $i \in V$ (making $\pr \left(\mathcal{E}_0^c \right)=0$). Then
 \begin{align}
     \|\bar{\xi}_{1,a}(1)-x_{1,a}(1) \|= \Theta \left( \max \left\{\frac{1}{d}, \frac{1}{\sqrt{N}} \right \} \right).
 \end{align}
\end{theorem}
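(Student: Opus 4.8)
The plan is to exploit the special structure of the degree process, for which NIMFA reproduces the first conditional moment \emph{exactly}, and to split the error into an intrinsic dynamical fluctuation and a purely topological bias. For the \erg{} we have $K=1$, $w_{11}=1$ and $d=\Theta(N\rho)$; the initial condition $z_{i,a}(0)=1$ forces $\xi_{i,a}(0)=1$ deterministically, and \eqref{eq:degree_process} gives the exact identities $\E_G(\xi_{i,a}(1))=z_{i,a}(1)=e^{-\delta_i/(N\rho)}$ and $x_{1,a}(1)=e^{-1}$. Since the clocks of distinct vertices are conditionally independent given $G$, the tower property yields the exact orthogonal decomposition
\begin{align*}
\|\bar{\xi}_{1,a}(1)-x_{1,a}(1)\|^2
= \underbrace{\E\left[\operatorname{Var}_G\!\left(\bar{\xi}_{1,a}(1)\right)\right]}_{=:(\mathrm{I})}
+ \underbrace{\E\left[\left(\E_G\bar{\xi}_{1,a}(1)-e^{-1}\right)^2\right]}_{=:(\mathrm{II})},
\end{align*}
the cross term vanishing because $\E_G\!\left[\bar\xi_{1,a}(1)-\E_G\bar\xi_{1,a}(1)\right]=0$. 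The goal is to show $(\mathrm{I})=\Theta(1/N)$ and $(\mathrm{II})=\Theta(1/d^2)$ and then take square roots.

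For $(\mathrm{I})$, conditional independence gives $\operatorname{Var}_G(\bar\xi_{1,a}(1))=\frac{1}{N^2}\sum_i z_{i,a}(1)\bigl(1-z_{i,a}(1)\bigr)$. The upper bound is immediate from $z(1-z)\le \tfrac14$, giving $(\mathrm{I})\le 1/(4N)$. For the lower bound, since $d\to\infty$ a Chernoff bound for the binomial $\delta_i$ shows $\delta_i/(N\rho)\to 1$ in probability, so $z_{i,a}(1)\to e^{-1}\in(0,1)$ and, by bounded convergence, $\E\!\left[z_{1,a}(1)(1-z_{1,a}(1))\right]\to e^{-1}(1-e^{-1})>0$; by exchangeability of the vertices $(\mathrm{I})=\frac1N\E\!\left[z_{1,a}(1)(1-z_{1,a}(1))\right]=\Theta(1/N)$.

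Term $(\mathrm{II})$ is the crux. Write $(\mathrm{II})=\bigl(\E[\E_G\bar\xi_{1,a}(1)]-e^{-1}\bigr)^2+\operatorname{Var}\bigl(\E_G\bar\xi_{1,a}(1)\bigr)$. For the bias, the binomial moment generating function gives $\E\!\left[e^{-\delta_i/(N\rho)}\right]=\bigl(1-\rho+\rho\,e^{-1/(N\rho)}\bigr)^{N-1}$; expanding the exponent to second order in the small parameter $1/(N\rho)=\Theta(1/d)$ shows that the \emph{convexity} of $\delta\mapsto e^{-\delta/(N\rho)}$ produces a strictly positive Jensen correction
\begin{align*}
\E\!\left[\E_G\bar{\xi}_{1,a}(1)\right]-e^{-1}
=\frac{e^{-1}(1-\rho)}{2N\rho}\bigl(1+o(1)\bigr)=\Theta\!\left(\tfrac1d\right),
\end{align*}
so the squared bias is $\Theta(1/d^2)$. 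For the variance, linearising $e^{-\delta_i/(N\rho)}\approx e^{-1}-\frac{e^{-1}}{N\rho}(\delta_i-N\rho)$ reduces the leading fluctuation of $\E_G\bar\xi_{1,a}(1)$ to that of the empirical mean degree $\bar\delta=2|E|/N$; as $|E|$ is a sum of $\binom N2$ independent Bernoulli$(\rho)$ variables, $\operatorname{Var}(\bar\delta)=\Theta(\rho)$ and hence $\operatorname{Var}\bigl(\E_G\bar\xi_{1,a}(1)\bigr)=\Theta\!\left(1/(N^2\rho)\right)=\Theta\!\left(1/(Nd)\right)$, which is dominated by the squared bias since $d\le N$. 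Therefore $(\mathrm{II})=\Theta(1/d^2)$.

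Combining, $\|\bar\xi_{1,a}(1)-x_{1,a}(1)\|^2=\Theta(1/N)+\Theta(1/d^2)=\Theta\!\left(\max\{1/N,\,1/d^2\}\right)$, and taking the square root gives the claim. A remark handles the $(1-\rho)$ factor appearing in the bias: in the only regime where $1/d$ dominates, namely $d\ll\sqrt N$, one necessarily has $\rho\to0$, so $1-\rho=\Theta(1)$ and the bias constant is genuinely bounded away from zero; in the complementary regime $1/\sqrt N$ dominates and term $(\mathrm{I})$ already realises it, so the $(1-\rho)$ factor never affects the stated order. The main obstacle is precisely term $(\mathrm{II})$: establishing the exact $\Theta(1/d)$ order of the Jensen bias — in particular a matching lower bound with a constant bounded away from zero — while simultaneously showing that the correlations among the degrees $\delta_i$ contribute only a subdominant $\Theta(1/(Nd))$ to the variance, which is exactly what the reduction of the linear part to the independent-edge count $|E|$ accomplishes.
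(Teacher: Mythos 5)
Your proposal is correct and rests on the same backbone as the paper's proof: the exact decomposition of $\|\bar{\xi}_{1,a}(1)-x_{1,a}(1)\|^2$ into the expected conditional variance (your term (I)) and the expected squared deviation of the conditional mean (your term (II)), with (I) $=\Theta(1/N)$ and (II) $=\Theta(1/d^2)+O(1/N)$. The only genuine difference is how (II) is evaluated. The paper expands the square and computes the one- and two-point moments $\E\bigl(e^{-\delta_1 t/d}\bigr)$ and $\E\bigl(e^{-\delta_1 t/d}e^{-\delta_2 t/d}\bigr)$ in closed form from the binomial moment generating function (Lemma \ref{l:moment}), obtaining (II) $=e^{-2t}\bigl[\exp\bigl(t-d(1-e^{-t/d})\bigr)-1\bigr]^2+O(1/N)$, so the Jensen bias and the degree correlations are handled in one exact formula. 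You instead split (II) into squared bias plus $\operatorname{Var}\bigl(\E_G\bar{\xi}_{1,a}(1)\bigr)$, get the bias by Taylor expansion, and control the variance by linearizing to the empirical mean degree, i.e.\ to the edge count. This works, but the linearization step is the one place where your sketch is not yet a proof: to conclude $\operatorname{Var}\bigl(\E_G\bar{\xi}_{1,a}(1)\bigr)=O(1/(Nd))$ you must also check that the quadratic remainder $\frac{1}{N}\sum_i (\delta_i-\E\delta_i)^2/(N\rho)^2$ fluctuates at a subdominant order (it does, at order $O(1/(Nd^2))$, or one can bypass linearization entirely by computing the pairwise covariances exactly through the shared edge $a_{12}$, which is in effect what the paper's two-point moment does).

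One computational slip to fix: the bias constant is $\frac{e^{-1}(1+\rho)}{2N\rho}\bigl(1+o(1)\bigr)$, not $\frac{e^{-1}(1-\rho)}{2N\rho}\bigl(1+o(1)\bigr)$. The pure convexity (Jensen) correction is indeed $\frac{e^{-1}(1-\rho)}{2N\rho}$, but since $\E\delta_i=(N-1)\rho\neq N\rho$ the conditional mean centers at $e^{-(N-1)/N}$ rather than $e^{-1}$, contributing an additional positive $e^{-1}/N$; the two add to $\frac{e^{-1}(1+\rho)}{2N\rho}$. Both contributions are positive, so your lower bound on the bias survives, and, as your closing remark already observes, in the only regime where the bias term dictates the answer ($d\ll\sqrt{N}$, hence $\rho\to 0$) either constant is $\Theta(1)$; with the corrected constant you do not even need that remark, since $1+\rho\geq 1$ uniformly. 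So the slip does not affect the stated order, but the corrected computation is cleaner.
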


\subsection{Discussion of the results}\label{subsec:discussion}
The above results lead to some interesting interpretations. One such could be phrased as ``\emph{how much structural information (local or global) does an antagonist need to mess up the initial conditions in order to produce the worst case scenario approximation?}''. Our theorems give an answer to this question. Indeed is true that Theorems \ref{t:general_upper_bound} and \ref{t:general_lower_bound} give a framework for general initial conditions, but we can also acknowledge that condition \eqref{eq:hom_condition} defines a quite natural family of initial conditions. Suppose that the antagonist has to explore the network in order to decide how to choose the initial condition at each site. This means that at each step he can discover only partially the network and decide the initial conditions based only on past information or the new one he can gather from a local neighborhood.  We could therefore model the antagonist as a random walk and is easy to see that the sets of initial condition he could come up with respect condition \eqref{eq:hom_condition}.

To elaborate on this, look at at the SI process, a version of the SIR process where the recover rate is set at $\gamma=0$ (and by simple rescaling of the time, we may also set $\beta=1$), so the state space is reduces to $\mathcal{S}=\{S,I \}.$  For the sake of simplicity, further assume that the underlying network is an Erdős-Rényi graph with parameter $\rho$ such that $d \leq N^{\frac{1}{2}-\varepsilon}$ for some $0<\varepsilon <\frac{1}{2}$ for large enough $N$.

As the degrees are given by local information, we may set the initial conditions to be $1-z_{i,S}(0)=z_{i,I}(0)=\kappa \frac{\delta_i}{d}$ with some suitably small $\kappa$. This makes the error term in \eqref{eq:hom_condition} to be proportional to
\begin{align*}
 \frac{1}{N} \sum_{i=1}^N \left | \frac{\delta_i}{d}-\frac{\bar{\delta}}{d} \right |^2 
\end{align*} 
where $\bar{\delta}:=\frac{1}{N} \sum_{i =1}^N \delta_i$ is the empirical degree average.

Notice that
$$\left | \frac{\delta_i}{d}-\frac{\bar{\delta}}{d} \right |^2 \leq 2 \left(\left | \frac{\delta_i}{d}-1 \right |^2+\left | \frac{\bar{\delta}}{d}-1 \right |^2\right).$$
Furthermore, we get that $\pr \left(\left|\frac{\bar{\delta}}{d}-1\right|^2 \geq \frac{1}{d} \right) \leq 2e^{-c N} $ with some $c>0$ when $N$ is large enough. Therefore, the relevant term left is
\begin{align*}
 \zeta:=\frac{1}{N}\sum_{k=1}^K \sum_{i \in I_K} \left | \frac{\delta_i}{d}-1 \right |^2. 
\end{align*}

It is easy to see that $\E(\zeta) \leq \frac{1}{d}.$ Furthermore, from \cite[Lemma 6.5]{erdosSpectralStatisticsErdos2013} (with $k=2$) we obtain
$$\left |\zeta-\E(\zeta)  \right |=O \left( \frac{\left(\log N \right)^{2 \xi} }{\sqrt{N}}\right) \ll \frac{1}{d}$$
with superpolynomial decay where $\xi>0$ is a constant, satisfying condition \eqref{eq:hom_condition}.

Another possible option for the SI process compatible with the scenario described above, is to set the initial conditions to be (or be close to) $z_{i,I}(0)= \kappa \frac{(v_1)_i}{\|v_1 \|_1}$ where $v_1$ is the Perron vector of the adjacency matrix $B$ corresponding to the largest eigenvalue $\lambda_1$ and $\kappa$ is a small number such that $\kappa^2$ is negligible. To see why, note that when $z_{i,I}(t)$ is small, we may linearize the dynamics into
\begin{align*}
 \frac{\dd}{\dd t} z_{i,I}(t) \approx& \left(B z_{\cdot, I}(t) \right)_i \\
 z_{i,I}(t)\approx &\sum_{m} e^{\lambda_m t} \langle v_m, z_{\cdot,I}(0) \rangle (v_{m})_i\approx e^{\lambda_1t}\langle v_1, z_{\cdot,I}(0) \rangle (v_{1})_i
\end{align*}
given that $t$ is suitably large for the leading eigenvalue to dominate but not too large for the linearization to break down. From Theorem 2.16 equation (2.33) in \cite{erdosSpectralStatisticsErdos2013} it is easy to see that condition \eqref{eq:hom_condition} is satisfied in this case too. Note that letting the SI dynamics evolve until the Perron vector starts to dominate, introduces some correlation between the state of the vertices even for fixed graphs, which violates the conditional independence assumption. Nonetheless, in  \cite[Lemma 4.3]{keligerConcentrationMeanField2024a} some propagation of chaos results are given, thus, we believe that this issue is merely a technical one.

\section{Proofs}\label{sec:proofs}
In this section we present the proof for the theorems presented in the previous section. In subsection \ref{subsec:upper} we prove the upper bounds for Theorem \ref{t:general_upper_bound} and \ref{t:general_homogeneous_upper_bound}, while in subsection \ref{subsec:lower}  we prove Theorem \ref{t:general_lower_bound} and \ref{t:homogeneous_lower_bound}.

\subsection{Upper bounds}\label{subsec:upper}
We start our discussion for the upper bounds with a useful lemma.

\begin{lemma}
\label{l:degree_bound}
There are constants $c_1',c_2'>0$ such that
\begin{align*}
\pr \left( \bigcup_{i=1}^{N} \left \{ \delta_i \geq c_1' d \right \} \right) \leq N e^{-c_2' N^{\alpha}}.
\end{align*}

\begin{proof} (Lemma \ref{l:degree_bound})

The centered variables $X_{j}^{(i)}:=a_{ij}-p_{ij}$ are independent and $|X_{j}^{(i)}| \leq 1.$ Note that
\begin{align*}
\sum_{j=1}^{N} \E \left( \left(X_{j}^{(i)} \right)^2 \right) =\sum_{j=1}^{N} p_{ij}(1-p_{ij}) \leq \sum_{j=1}^{N} p_{ij}=d_i.
\end{align*}
Furthermore,
\begin{align*}
(i \in I_k) \ d_i= \rho \sum_{l=1} w_{kl} N_l \leq M N \rho \overset{\eqref{eq:d}}{\leq} C d    
\end{align*}
for some constant $C>0$.

Set $c_1':=C+1$. Bernstein's inequality results in
\begin{align*}
&\pr \left(\delta_i \geq c_1' d \right) \leq \pr \left(\delta_i \geq d_i+d \right)=\pr \left(\sum_{j}X_j^{(i)} \geq d \right)\leq\\
& \exp \left(-\frac{\frac{1}{2}d^2}{\sum_{j=1}^{N} \E \left( \left(X_{j}^{(i)} \right)^2 \right)+\frac{1}{3}d} \right) \leq \exp \left(-\frac{\frac{1}{2}}{C+\frac{1}{3}}d \right)=:e^{-c_2'd} \leq e^{-c_2' N^{\alpha}}.
\end{align*}
\end{proof}
\end{lemma}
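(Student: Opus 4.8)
The plan is to prove Lemma~\ref{l:degree_bound} as a routine concentration-plus-union-bound statement. Each realized degree $\delta_i=\sum_{j=1}^N a_{ij}$ is a sum of independent indicators $a_{ij}$ bounded by $1$, with mean $\E(\delta_i)=d_i$, so the event $\{\delta_i \geq c_1' d\}$ is a one-sided large-deviation event for a single such sum. I would first establish a per-vertex tail bound of stretched-exponential type in $d$, and then take a union bound over the $N$ vertices.

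First I would fix $i \in I_k$ and convert the threshold into a genuine deviation above the mean. Using the weight bound $w_{kl}\leq M$ together with \eqref{eq:d}, the expected degree satisfies $d_i=\rho\sum_{l}w_{kl}N_l \leq MN\rho \leq Cd$ for a constant $C>0$ uniform in $i$, since \eqref{density_cond} guarantees $N\rho=\Theta(d)$. Choosing $c_1':=C+1$ then forces $\{\delta_i \geq c_1' d\}\subseteq\{\delta_i-d_i \geq d\}$, so it suffices to bound $\pr(\delta_i-d_i \geq d)$.

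Next I would apply Bernstein's inequality to the centered sum $\sum_j (a_{ij}-p_{ij})$. The decisive input is the variance proxy $\sum_{j}p_{ij}(1-p_{ij}) \leq d_i \leq Cd$, which scales like $d$ rather than like $N$. Bernstein then gives $\pr(\delta_i-d_i \geq d) \leq \exp\!\big(-\tfrac{d^2/2}{Cd+d/3}\big)=e^{-c_2' d}$ with $c_2'=\tfrac{1/2}{C+1/3}$, and the standing assumption $d \geq N^{\alpha}$ upgrades this to $e^{-c_2' N^{\alpha}}$. A union bound over $i=1,\dots,N$ then produces the factor $N$ and the claim.

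The only real subtlety — and the step I would flag as the crux — is the choice of concentration inequality rather than any difficult estimate. A naive Hoeffding bound would yield a rate $e^{-\Theta(d^2/N)}$, which is useless in the sparse regime, since $d \leq N^{\alpha}$ with $\alpha<\tfrac12$ makes $d^2/N\to 0$. Bernstein is essential precisely because the variance is only $O(d)\ll N$, delivering an $e^{-\Theta(d)}$ rate; combined with $d \geq N^{\alpha}$ this comfortably beats the union-bound loss $N=e^{O(\log N)}$, as $N^{\alpha}\gg \log N$. Everything else is bookkeeping.
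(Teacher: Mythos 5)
Your proposal is correct and follows essentially the same route as the paper: bound $d_i \leq Cd$ via the weight bound and \eqref{density_cond}, set $c_1' = C+1$ to reduce to the deviation event $\{\delta_i - d_i \geq d\}$, apply Bernstein's inequality with variance proxy $\sum_j p_{ij}(1-p_{ij}) \leq d_i \leq Cd$ to get a rate $e^{-c_2' d} \leq e^{-c_2' N^{\alpha}}$, and finish with a union bound over the $N$ vertices. Your remark on why Hoeffding would fail in the sparse regime is a correct observation but does not alter the argument, which matches the paper's proof step for step, including the constants.
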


\begin{definition}
Recall Corollary \ref{c:spectral} and Lemma \ref{l:degree_bound}. Define the events $\mathcal{A},\mathcal{B}$ and $\mathcal{E}$ as
\begin{align}
\label{eq:events}
\begin{split}
 \mathcal{A}:=& \bigcup_{i=1}^{N} \{\delta_i \leq c_1' d \},  \\
 \mathcal{B}:=& \left \{\|B-\hat{B} \|_2 \leq \frac{c_1'}{\sqrt{d}} \right \}, \\
 \mathcal{E}:=&\mathcal{A} \cap \mathcal{B}.
\end{split}
\end{align}
Clearly
\begin{align*}
\pr \left(\mathcal{E}^{c} \right) \leq Ne^{-c_2' N^{\alpha}}+2e^{-c_2 N^{2\beta}}
\end{align*}
which decays super-polinomially.
\end{definition}

\begin{lemma}
\label{l:xi_bar_error_decompose}
 Define $\|X \|_{G}^2:=\E_G(X^2).$  Then on the event $\mathcal{E}$ for any $t \geq 0$
\begin{align*}
\sup_{0 \leq \tau \leq t}\left\|\bar{\xi}_{k,s}(\tau)-x_{k,s}(\tau) \right \|_{G} =&O \left(\max \left \{\frac{1}{d}, \frac{1}{\sqrt{N}} \right \} \right)\\
&+\sup_{0 \leq \tau \leq t}\left|\bar{z}_{k,s}(\tau)-x_{k,s}(\tau) \right|,
\end{align*}
where $O(\cdot)$ depend on $t$.
\end{lemma}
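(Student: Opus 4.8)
The plan is to split the error, in the conditional $L^2$ norm $\|\cdot\|_G$, into a genuinely stochastic part and a part that is deterministic once $G$ is fixed. Since $z_{i,s}(\tau)$ solves the NIMFA system \eqref{eq:NIMFA} with $G$-measurable initial data, its block average $\bar z_{k,s}(\tau):=\frac1{N_k}\sum_{i\in I_k}z_{i,s}(\tau)$ is $G$-measurable, while $x_{k,s}(\tau)$ is deterministic; hence $\bar z_{k,s}(\tau)-x_{k,s}(\tau)$ is constant given $G$ and $\|\bar z_{k,s}(\tau)-x_{k,s}(\tau)\|_G=|\bar z_{k,s}(\tau)-x_{k,s}(\tau)|$. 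Because $\|\cdot\|_G$ satisfies the (conditional) triangle inequality, for every $\tau$
\[
\|\bar\xi_{k,s}(\tau)-x_{k,s}(\tau)\|_G \le \|\bar\xi_{k,s}(\tau)-\bar z_{k,s}(\tau)\|_G + |\bar z_{k,s}(\tau)-x_{k,s}(\tau)|,
\]
so after taking $\sup_{0\le\tau\le t}$ it remains to show $\sup_{0\le\tau\le t}\|\bar\xi_{k,s}(\tau)-\bar z_{k,s}(\tau)\|_G=O(\max\{1/d,1/\sqrt N\})$ on $\mathcal E$.

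For this concentration term I would use the conditional bias–variance decomposition
\[
\E_G\big((\bar\xi_{k,s}-\bar z_{k,s})^2\big)=\operatorname{Var}_G(\bar\xi_{k,s})+\big(\E_G(\bar\xi_{k,s})-\bar z_{k,s}\big)^2 ,
\]
and both pieces rest on a single propagation-of-chaos estimate $\cov_G(\xi_{i,s'},\xi_{j,\tilde s})=O\!\big(a_{ij}/(N\rho)\big)$, i.e. correlations are created essentially across edges and are of order $1/d$ there. For the variance, $\operatorname{Var}_G(\bar\xi_{k,s})=\frac1{N_k^2}\sum_{i,j\in I_k}\cov_G(\xi_{i,s},\xi_{j,s})$: the $N_k$ diagonal entries are $O(1)$ and give $O(1/N)$, while the off-diagonal sum is $\frac1{N_k^2}O\!\big(\tfrac1{N\rho}\sum_{i,j}a_{ij}\big)=\frac1{N_k^2}O(N)=O(1/N)$, using $\sum_{i,j}a_{ij}=O(Nd)$ on $\mathcal A$ (Lemma \ref{l:degree_bound}) and $N\rho=\Theta(d)$. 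For the bias, set $v_{i,s}:=\E_G(\xi_{i,s})-z_{i,s}$; subtracting \eqref{eq:NIMFA} from \eqref{eq:ODE_non_closed} gives an ODE for $v_{i,s}$ whose inhomogeneity is $\frac1{N\rho}\sum_j a_{ij}\cov_G(\xi_{i,s'},\xi_{j,\tilde s})$ plus terms linear in $v$. The covariance feedback is $\frac1{(N\rho)^2}\sum_j a_{ij}=\frac{\delta_i}{(N\rho)^2}=O(d/d^2)=O(1/d)$ on $\mathcal A$, so a Gronwall estimate on $\max_{i,s}|v_{i,s}(\tau)|$ — with the linear term propagated through $B$ controlled by the spectral bound $\mathcal B$ (Corollary \ref{c:spectral}) — yields $|v_{i,s}(\tau)|=O(1/d)$ uniformly on $[0,t]$, whence $|\E_G(\bar\xi_{k,s})-\bar z_{k,s}|=O(1/d)$ after averaging. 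Taking square roots gives standard deviation $O(\max\{1/\sqrt N,1/d\})$.

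These are precisely the quenched concentration estimates of \cite{keligerConcentrationMeanField2024a}, whose hypotheses (bounded rescaled degrees $\delta_i/(N\rho)=O(1)$ and spectral concentration of $B$) are exactly the events $\mathcal A$ and $\mathcal B$ constituting $\mathcal E$; I would invoke them to get the uniform-in-$\tau$ bound, the $\sup_{0\le\tau\le t}$ being absorbed into the $t$-dependence of the Gronwall constant, and finally take the maximum over the finitely many pairs $(k,s)$, which preserves the order. I expect the main obstacle to be the covariance/propagation-of-chaos estimate itself — establishing simultaneously that intra-block covariances sum to $O(1/N)$ and that the edge-weighted covariance feedback in the $v_{i,s}$-equation is $O(1/d)$, uniformly on $[0,t]$ and on $\mathcal E$; once this input is granted, the rest is triangle inequality, bias–variance splitting, and Gronwall bookkeeping.
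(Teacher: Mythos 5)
Your proposal is correct and takes essentially the same route as the paper: the same split into a $G$-conditional fluctuation term, a quenched bias term $\left|\E_G(\bar{\xi}_{k,s})-\bar{z}_{k,s}\right|$, and the $G$-measurable term $\left|\bar{z}_{k,s}-x_{k,s}\right|$, with the first two bounded by $O(1/\sqrt{N})$ and $O(1/d)$ by invoking Theorems 4.4 and 4.14 of \cite{keligerConcentrationMeanField2024a}, whose hypotheses (uniformly bounded rescaled degrees, maximal interaction rate $O(1/d)$) are verified on $\mathcal{E}$ exactly as you do (in fact only the degree event $\mathcal{A}$ is needed here, not the spectral event $\mathcal{B}$). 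One small caveat in your supplementary sketch: $\cov_G(\xi_{i,s'},\xi_{j,\tilde{s}})=O\left(a_{ij}/(N\rho)\right)$ is not literally true, since non-adjacent vertices become correlated through common neighbors (at order $O(1/N)$), but as you ultimately rely on the cited estimates rather than this heuristic, it does not create a gap.
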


\begin{proof} (Lemma \ref{l:xi_bar_error_decompose})

\begin{align*}
\left \|\bar{\xi}_{k,s}(\tau)-x_{k,s}(\tau) \right\|_{G} \leq& \left\|\bar{\xi}_{k,s}(\tau)-\E_G \left(\bar{\xi}_{k,s}(\tau) \right) \right \|_{G}+\left|\E_G \left(\bar{\xi}_{k,s}(\tau) \right)-\bar{z}_{k,s}(\tau) \right| \\
&+\left|\bar{z}_{k,s}(\tau)-x_{k,s}(\tau) \right|
\end{align*}

Note that on $\mathcal{E}$
\begin{align*}
\sum_{j=1}^N b_{ij}=\frac{1}{N \rho}\sum_{j=1}^N a_{ij}=\frac{d_i}{N \rho} \leq \frac{c_1 d}{N \rho}=O(1)    
\end{align*}
uniformly in $i$, which means the total incoming rates are uniformly bounded in $i$.

Based on Theorem Theorem 4.4 in \cite{keligerConcentrationMeanField2024a}
\begin{align*}
\left \|\bar{\xi}_{k,s}(\tau)-\E_G \left(\bar{\xi}_{k,s}(\tau) \right) \right \|_{G}^2=\operatorname{Var}_{G}(\bar{\xi}_{k,s}(\tau))=O \left(\frac{1}{N_k} \right)=O \left(\frac{1}{N} \right).
\end{align*}

Also, the maximal interaction rate is $$\frac{1}{N \rho} \max_{\substack{s,s',\tilde{s} \\ s \neq s' }}q_{\tilde{s},s \to s'}=O \left( \frac{1}{d} \right),$$
therefore, by Theorem 4.14 in \cite{keligerConcentrationMeanField2024a} we obtain
\begin{align*}
\left |\E_G \left(\bar{\xi}_{k,s}(\tau) \right)-\bar{z}_{k,s}(\tau) \right| \leq \frac{1}{N_k}\sum_{i \in I_k}  \left| \pr_{G}\left(\xi_{i,s}(\tau)=1 \right)-z_{i,s}(\tau) \right |=O\left(\frac{1}{d} \right).
\end{align*}

All the bounds above are uniform in $0 \leq \tau \leq t.$
\end{proof}

\begin{definition}
$\hat{z}_{i,s}(t)$ is the solution of \eqref{eq:NIMFA} where the matrix $B$ is replaced by $\hat{B}$:
\begin{align}
\label{eq:z_hat}
\frac{\dd}{\dd t} \hat{z}_{i,s}(t)=\sum_{s' \in \mathcal{S}}q_{s' \to s}\hat{z}_{i,s'}(t)+\sum_{s', \tilde{s} \in \mathcal{S}}q_{\tilde{s};s' \to s}\hat{z}_{i,s'}(t) \left(\hat{B}\hat{z}_{\cdot,\tilde{s}}(t) \right)_{i}
\end{align}
with initial conditions $\hat{z}_{i,s}(0)=z_{i,s}(0)=\pr \left(\xi_{i,s}(0)=1 \right).$   
\end{definition}

\begin{remark}
\label{r:B_hat}
For an arbitrary vector $u \in \mathbb{R}^N$
\begin{align}
\label{eq:B_hat_identity}
(i \in I_k) \ \left(\hat{B}u \right)_i=\sum_{l=1}^K \sum_{j \in I_l} \hat{b}_{ij}u_j=\sum_{l=1}^{K} \frac{1}{N \rho} \rho w_{kl} \sum_{j \in I_l} u_j=\sum_{l=1}^{K} w_{kl} \pi_l \bar{u}_l
\end{align}
which depends only on $k$.

By applying \eqref{eq:B_hat_identity} and taking averages on the block $I_k$ on both sides of \eqref{eq:z_hat} it is easy to see that
\begin{align}
\label{eq:x_identity}
x_{k,s}(t)=\frac{1}{N_k} \sum_{i \in I_k} \hat{z}_{i,s}(t)
\end{align}
solves \eqref{eq:x}.
\end{remark}

\begin{definition}
\begin{align}
\label{eq:Delta}
\begin{split}
\Delta_{i,s}(t):=&z_{i,s}(t)-\hat{z}_{i,s}(t), \\
\tilde{\Delta}_{i}(t):=&\sup_{0 \leq \tau \leq t} \max_{s \in \mathcal{S}} \left| \Delta_{i,s}(t) \right|, \\
\tilde{\Delta}(t):=& \left(\tilde{\Delta}_{i}(t) \right)_{i=1}^N.
\end{split}
\end{align}
\end{definition}

\begin{lemma}
\label{l:Delta}
On the event $\mathcal{E}$  
\begin{align*}
\frac{1}{N}\|\tilde{\Delta}(t) \|_2^2=O \left( \frac{1}{d} \right),
\end{align*}
where $O(\cdot)$ depend on $t$.
\end{lemma}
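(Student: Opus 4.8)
The plan is to write down the ODE satisfied by $\Delta_{\cdot,s}(\tau)$, treat the discrepancy between $B$ and $\hat{B}$ as a small source term, and close the estimate with Grönwall's inequality. Subtracting \eqref{eq:z_hat} from \eqref{eq:NIMFA2} and using $\Delta_{i,s}(0)=0$, the only nontrivial step is to expand the quadratic interaction. Adding and subtracting $z_{i,s'}(\hat{B}z_{\cdot,\tilde s})_i$ and $\hat z_{i,s'}(\hat{B}z_{\cdot,\tilde s})_i$ produces the decomposition
\begin{align*}
z_{i,s'}(Bz_{\cdot,\tilde s})_i-\hat z_{i,s'}(\hat{B}\hat z_{\cdot,\tilde s})_i=\underbrace{z_{i,s'}\big((B-\hat{B})z_{\cdot,\tilde s}\big)_i}_{\text{source}}+\underbrace{\Delta_{i,s'}(\hat{B}z_{\cdot,\tilde s})_i+\hat z_{i,s'}(\hat{B}\Delta_{\cdot,\tilde s})_i}_{\text{feedback}},
\end{align*}
so the first term carries the discrepancy $B-\hat{B}$ while the last two are linear in $\Delta$. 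Three facts will be used repeatedly: the NIMFA flow preserves the simplex, so $z_{i,s},\hat z_{i,s}\in[0,1]$; the matrix $\hat{B}$ is deterministic with $\hat b_{ij}=w_{kl}/N$ for $i\in I_k,\,j\in I_l$, whence $\|\hat{B}\|_\infty,\|\hat{B}\|_1\le M$ and thus $\|\hat{B}\|_2\le M$; and on $\mathcal B\subseteq\mathcal E$ one has $\|B-\hat{B}\|_2=O(1/\sqrt d)$ by Corollary \ref{c:spectral}.

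First I would establish a uniform $L^2$ bound. Writing $\Phi(\tau):=\max_{s}\|\Delta_{\cdot,s}(\tau)\|_2$ and differentiating the norm, the source term contributes at most $\|B-\hat{B}\|_2\,\|z_{\cdot,\tilde s}\|_2=O(\sqrt{N/d})$, using $\|z_{\cdot,\tilde s}\|_2\le\sqrt N$, while the two feedback terms are bounded by $M\Phi(\tau)$ and $\|\hat{B}\|_2\Phi(\tau)\le M\Phi(\tau)$ respectively (here I exploit $|z_{i,s'}|,|\hat z_{i,s'}|\le1$ and $\|\hat{B}z_{\cdot,\tilde s}\|_\infty\le M$). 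Since $\mathcal S$ is finite and all rates are bounded, this yields $\tfrac{\dd}{\dd\tau}\Phi(\tau)\le C_1\Phi(\tau)+C_2\sqrt{N/d}$ with $\Phi(0)=0$, so Grönwall gives $\sup_{0\le\tau\le t}\Phi(\tau)=O(\sqrt{N/d})$, the constant depending on $t$.

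Next I would upgrade this to the pointwise supremum defining $\tilde\Delta$. Integrating the componentwise ODE and bounding $\max_{s'}|\Delta_{i,s'}(u)|\le\tilde\Delta_i(u)$ and $|(\hat{B}z_{\cdot,\tilde s})_i|\le M$, then taking $\max_s$ and $\sup_{\tau\le t}$, gives
\begin{align*}
\tilde\Delta_i(t)\le\int_0^t\Big(C\,\tilde\Delta_i(u)+C\,Q_i(u)+C\,P_i(u)\Big)\dd u,
\end{align*}
where $Q_i(u):=\max_{\tilde s}\big|\big((B-\hat{B})z_{\cdot,\tilde s}(u)\big)_i\big|$ and $P_i(u):=\max_{\tilde s}\big|(\hat{B}\Delta_{\cdot,\tilde s}(u))_i\big|$. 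A per-node Grönwall yields $\tilde\Delta_i(t)\le e^{Ct}\int_0^t C(Q_i(u)+P_i(u))\dd u$; squaring, applying Cauchy--Schwarz in $u$, and summing over $i$ gives
\begin{align*}
\|\tilde\Delta(t)\|_2^2\le C_t\int_0^t\sum_{i=1}^N\big(Q_i(u)^2+P_i(u)^2\big)\dd u.
\end{align*}
Finally $\sum_i Q_i(u)^2\le|\mathcal S|\,\|B-\hat{B}\|_2^2\,N=O(N/d)$ on $\mathcal B$, while $\sum_i P_i(u)^2\le|\mathcal S|\,\|\hat{B}\|_2^2\,\Phi(u)^2=O(N/d)$ by the $L^2$ bound of the previous step; dividing by $N$ gives $\tfrac1N\|\tilde\Delta(t)\|_2^2=O(1/d)$.

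The main obstacle is that the supremum over time in the definition of $\tilde\Delta_i$ sits \emph{inside} the sum over $i$, so one cannot simply quote $\sup_\tau\Phi(\tau)^2$; the two-step scheme is precisely what circumvents this, with the uniform $L^2$ bound of the first step being exactly the input needed to control the non-local feedback term $\hat{B}\Delta$ in the second. A secondary technicality is justifying that the NIMFA solutions remain in $[0,1]$ and that the derivative-of-the-norm manipulations are legitimate (handled via upper Dini derivatives and the fact that $\Delta$ vanishes only on a set that does not affect the integral inequality), but these are routine.
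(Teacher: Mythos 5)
Your proof is correct, and its skeleton is the same as the paper's: write the integral (Duhamel) form of the equation for $\Delta_{i,s}$, split the bilinear difference into a source term carrying $B-\hat{B}$ plus terms linear in $\Delta$, control the source by Corollary \ref{c:spectral}, and close with Gr\"onwall. There are two genuine differences worth recording. First, your decomposition attaches the coefficient $(\hat{B}z_{\cdot,\tilde s})_i$ to the feedback term $\Delta_{i,s'}$, and this coefficient is \emph{deterministically} $O(1)$ since $\hat b_{ij}=w_{kl}/N$ gives row sums at most $M$; the paper instead attaches $(Bz_{\cdot,\tilde s})_i$, which is $O(1)$ only on the degree event $\mathcal{A}$ of \eqref{eq:events}. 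Consequently your argument runs on the spectral event $\mathcal{B}$ alone, a small gain in economy of hypotheses (the paper still needs $\mathcal{A}$ elsewhere, e.g.\ in Lemma \ref{l:xi_bar_error_decompose}, so nothing is saved globally). Second, your two-step scheme — a uniform-in-time $L^2$ bound $\Phi(\tau)=O(\sqrt{N/d})$ first, then a per-node Gr\"onwall fed by it — is more roundabout than necessary: the ``obstacle'' you name (the time-supremum sitting inside the sum over $i$) is dispatched in the paper in a single pass. After squaring via Cauchy--Schwarz, taking $\max_s$ and $\sup_{\tau\le t}$ pointwise in $i$ (legitimate because the right-hand side is nondecreasing in $t$), and averaging over $i$, the feedback term at time $\tau$ is bounded by
\begin{align*}
\frac{1}{N}\sum_{i=1}^N \left(\hat{B}\Delta_{\cdot,\tilde s}(\tau)\right)_i^2 \leq \frac{\|\hat{B}\|_2^2}{N}\left\|\Delta_{\cdot,\tilde s}(\tau)\right\|_2^2 \leq \frac{\|\hat{B}\|_2^2}{N}\left\|\tilde{\Delta}(\tau)\right\|_2^2,
\end{align*}
using $|\Delta_{i,\tilde s}(\tau)|\le\tilde{\Delta}_i(\tau)$, so Gr\"onwall applies directly to $\frac{1}{N}\|\tilde{\Delta}(t)\|_2^2$ and your preliminary step is subsumed. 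Both routes are valid; yours trades a slightly longer argument for a decomposition that is marginally more robust, the paper's is shorter.
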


\begin{proof} (Lemma \ref{l:Delta})

Recall \eqref{eq:NIMFA2}. After integration and subtracting the analogue with $\hat{B}$ we get
\begin{align*}
\Delta_{i,s}(t)=&\sum_{s' \in \mathcal{S}} q_{s' \to s} \int_{0}^{t} \Delta_{i,s'}(\tau) \dd \tau\\
&+ \int_{0}^{t} \sum_{s', \tilde{s} \in \mathcal{S}}q_{\tilde{s}; s' \to s}\left[z_{i,s'}(\tau) \left(B z_{\cdot, \tilde{s}}(\tau) \right)_{i}-\hat{z}_{i,s'}(\tau) \left(\hat{B} \hat{z}_{\cdot, \tilde{s}}(\tau) \right)_{i} \right]  \dd \tau 
\end{align*}

After applying Cauchy-Schwartz inequality for sums and integrals we obtain 
\begin{align}
\label{eq:delta^2}
\begin{split}
\Delta_{i,s}^2(t)\leq &C_1\left( \sum_{s' \in \mathcal{S}} \int_{0}^{t} \Delta_{i,s'}^2(\tau) \dd \tau \right.\\
&\left.+ \int_{0}^{t} \sum_{s', \tilde{s} \in \mathcal{S}}\left[z_{i,s'}(\tau) \left(B z_{\cdot, \tilde{s}}(\tau) \right)_{i}-\hat{z}_{i,s'}(\tau) \left(\hat{B} \hat{z}_{\cdot, \tilde{s}}(\tau) \right)_{i} \right]^2  \dd \tau  \right)
\end{split}
\end{align}
for some $C_1>0$ (depending on $t$).

Let us reformulate the expression between the square brackets.
\begin{align*}
&z_{i,s'}(\tau) \left(B z_{\cdot, \tilde{s}}(\tau) \right)_{i}-\hat{z}_{i,s'}(\tau) \left(\hat{B} \hat{z}_{\cdot, \tilde{s}}(\tau) \right)_{i}=\\
&\left(B z_{\cdot, \tilde{s}}(\tau) \right)_{i}\Delta_{i,s'}(\tau)+\hat{z}_{i,s'}(\tau) \left[\left((B-\hat{B})z_{\cdot,s'}(\tau) \right)_i+\left(\hat{B}\Delta_{\cdot,\tilde{s}}(\tau) \right)_i \right]
\end{align*}
On the even $\mathcal{E}$ we have $\left(B z_{\cdot, \tilde{s}}(\tau) \right)_{i}=O(1)$ (uniformly in $i$) making
\begin{align*}
&\left[z_{i,s'}(\tau) \left(B z_{\cdot, \tilde{s}}(\tau) \right)_{i}-\hat{z}_{i,s'}(\tau) \left(\hat{B} \hat{z}_{\cdot, \tilde{s}}(\tau) \right)_{i} \right]^2=\\
&O \left(\Delta_{i,s'}^2(\tau)+\left((B-\hat{B})z_{\cdot,s'}(\tau) \right)_i^2+ \left(\hat{B}\Delta_{\cdot,\tilde{s}}(\tau) \right)_i^2\right).
\end{align*}

Furthermore, also on the event $\mathcal{E}$
\begin{align*}
\frac{1}{N}\sum_{i=1}^N\left((B-\hat{B})z_{\cdot,s'}(\tau) \right)_i^2 =& \frac{1}{N}\left \|(B-\hat{B})z_{\cdot,s'}(\tau) \right \|_2^2 \\
\leq& \frac{1}{N} \|B-\hat{B} \|_2^2 \|z_{\cdot, s'}(\tau) \|_2^2 \leq \|B-\hat{B} \|_2^2=O \left(\frac{1}{d} \right), \\
\frac{1}{N}\sum_{i=1}^N  \left(\hat{B}\Delta_{\cdot,\tilde{s}}(\tau) \right)_i^2=& \frac{1}{N}\left\| \hat{B} \Delta_{\cdot, \tilde{s}}(\tau) \right\|_2^2\leq \frac{\|\hat{B} \|_2^2}{N} \left\| \Delta_{\cdot, \tilde{s}}(\tau) \right\|_2^2 \\
=&O \left(\frac{1}{N} \left\| \Delta_{\cdot, \tilde{s}}(\tau) \right\|_2^2 \right).
\end{align*}

Taking maximums in $s,\tilde{s},s' $ taking supreme in $\tau \in [0,t]$ and taking averages in $i$ and applying the bounds above results in
\begin{align*}
\frac{1}{N} \| \tilde{\Delta}(t) \|_2^2 \leq C_2 \int_{0}^{t}\frac{1}{N} \| \tilde{\Delta}(\tau) \|_2^2 \dd \tau +O\left(\frac{1}{d} \right)
\end{align*}
for some $C_2>0$. Applying Grönwall's inequality finishes the proof.
\end{proof}

These preparations are now enough to prove Theorem \ref{t:general_upper_bound}.

\begin{proof}(Theorem \ref{t:general_upper_bound})
 
 Recall
 \begin{align*}
 \pr \left( \mathcal{E}^{c}\right) \leq Ne^{-c_2'N^{\alpha}}+2e^{-c_2N^{2\beta}}  \ll \frac{1}{d}.
 \end{align*}

 Since $\bar{\xi}_{k,s}(t), x_{k,s}(t)$ are in $[0,1]$ we have
 \begin{align*}
  \|\bar{\xi}_{k,s}(\tau)- x_{k,s}(\tau) \|^2 \leq\E \left(\|\bar{\xi}_{k,s}(\tau)- x_{k,s}(\tau) \|_{G}^2 \1{\mathcal{E}} \right)+\pr \left(\mathcal{E}^{c} \right).   
 \end{align*}
Using the identity $(a+b)^2 \leq 2(a^2+b^2)$ \eqref{eq:x_identity} and Lemma \ref{l:xi_bar_error_decompose} on the event $\mathcal{E}$ we get
\begin{align*}
\sup_{0 \leq \tau \leq t}\|\bar{\xi}_{k,s}(\tau)- x_{k,s}(\tau) \|_{G}^2 \leq& 2 \sup_{0 \leq \tau \leq t}|\bar{z}_{k,s}(\tau)- x_{k,s}(\tau) |^2+O\left(\frac{1}{d} \right)\\
 \leq & 2\left(\frac{1}{N_k}\sum_{i \in I_K} \tilde{\Delta}_{i}(t) \right)^2+O \left(\frac{1}{d} \right) \\
\leq & \frac{2}{c_{block}^2} \frac{1}{N}\|\tilde{\Delta}(t) \|_2^2+O\left(\frac{1}{d} \right)=O\left(\frac{1}{d} \right).
\end{align*}
\end{proof}

Before we can prove Theorem \ref{t:general_homogeneous_upper_bound} we need a few more extra steps.

\begin{definition}
Let $\tilde{z}_{i,s}(t)$ be the solution of \eqref{eq:z_hat} with initial condition $\tilde{z}_{i,s}(0)=\bar{z}_{k,s}(0)=x_{k,s}(0)$ for all $k \in \{1, \dots, K\}, \ i \in I_k $ and $s \in \mathcal{S}. $ It is easy to see that for all $t \geq 0$ we also have
\begin{align}
\label{eq:x_identity2}
    \tilde{z}_{i,s}(t)=x_{k,s}(t) \ (i \in I_k)
\end{align}
by checking that it solves \eqref{eq:z_hat} (and it trivially satisfies the initial conditions).
\end{definition}

\begin{definition}
\begin{align}
\label{eq:Delta*}
\begin{split}
\Delta_{i,s}^{*}(t):=&\hat{z}_{i,s}(t)-\tilde{z}_{i,s}(t), \\
\tilde{\Delta}_{i}^{*}(t):=&\sup_{0 \leq \tau \leq t} \max_{s \in \mathcal{S}} \left| \Delta_{i,s}^{*}(t) \right|, \\
\tilde{\Delta}^{*}(t):=& \left(\tilde{\Delta}_{i}^{*}(t) \right)_{i=1}^N.
\end{split}
\end{align}
\end{definition}

\begin{lemma}
\label{l:Delta*}
\begin{align*}
\frac{1}{N} \left\| \tilde{\Delta}^*(t) \right\|_2^2=O \left(\frac{1}{N} \left\| \tilde{\Delta}^*(0) \right \|_2^2 \right)
\end{align*}
\end{lemma}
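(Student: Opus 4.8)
The plan is to reuse the Grönwall scheme of Lemma~\ref{l:Delta}, exploiting the crucial simplification that $\hat z$ and $\tilde z$ both solve the \emph{same} system \eqref{eq:z_hat} driven by the deterministic matrix $\hat B$; hence no random perturbation $(B-\hat B)$ enters, and the only source of discrepancy is the difference of initial data $\Delta^{*}_{i,s}(0)$. In particular the whole estimate is deterministic and needs no conditioning on $\mathcal{E}$. First I would integrate \eqref{eq:z_hat} for $\hat z$ and for $\tilde z$ and subtract, obtaining
\begin{align*}
\Delta_{i,s}^{*}(t)=\Delta_{i,s}^{*}(0)&+\sum_{s'\in\mathcal S}q_{s'\to s}\int_0^t \Delta_{i,s'}^{*}(\tau)\,\dd\tau\\
&+\int_0^t \sum_{s',\tilde s\in\mathcal S}q_{\tilde s;s'\to s}\Bigl[\hat z_{i,s'}(\tau)\bigl(\hat B\hat z_{\cdot,\tilde s}(\tau)\bigr)_i-\tilde z_{i,s'}(\tau)\bigl(\hat B\tilde z_{\cdot,\tilde s}(\tau)\bigr)_i\Bigr]\dd\tau,
\end{align*}
and split the bracket telescopically, exactly as in Lemma~\ref{l:Delta}, into $\Delta_{i,s'}^{*}(\tau)\bigl(\hat B\hat z_{\cdot,\tilde s}(\tau)\bigr)_i+\tilde z_{i,s'}(\tau)\bigl(\hat B\Delta_{\cdot,\tilde s}^{*}(\tau)\bigr)_i$.

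The deterministic inputs I would use are that all the $\hat z,\tilde z$ stay in $[0,1]$ and that, by \eqref{eq:B_hat_identity}, $(\hat B u)_i=\sum_l w_{kl}\pi_l\bar u_l$, which simultaneously gives $\bigl(\hat B\hat z_{\cdot,\tilde s}(\tau)\bigr)_i=O(1)$ uniformly in $i,\tau$ and $\|\hat B\|_2=O(1)$ (both without any concentration event, since $\hat B$ is deterministic). Squaring the integrated identity and applying Cauchy--Schwarz to the finite sums and to the time integral exactly as in \eqref{eq:delta^2}, then bounding the $O(1)$ factors, I arrive at a pointwise estimate of the form $\Delta_{i,s}^{*}(\tau)^2=O\bigl(\Delta_{i,s}^{*}(0)^2+\int_0^t\Delta_{i,s'}^{*}(\sigma)^2\,\dd\sigma+\int_0^t(\hat B\Delta_{\cdot,\tilde s}^{*}(\sigma))_i^2\,\dd\sigma\bigr)$, valid for every $\tau\le t$ and every $s$.

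Then I take the maximum over $s,\tilde s,s'$, the supremum over $\tau\in[0,t]$, and the average over $i$. For the coupling term I use the monotonicity $|\Delta_{j,\tilde s}^{*}(\sigma)|\le\tilde\Delta_j^{*}(\sigma)$ together with $\hat b_{ij}\ge 0$ to get $(\hat B\Delta_{\cdot,\tilde s}^{*}(\sigma))_i^2\le(\hat B\tilde\Delta^{*}(\sigma))_i^2$, and then $\frac1N\sum_i(\hat B\tilde\Delta^{*}(\sigma))_i^2=\frac1N\|\hat B\tilde\Delta^{*}(\sigma)\|_2^2\le\frac{\|\hat B\|_2^2}{N}\|\tilde\Delta^{*}(\sigma)\|_2^2=O\bigl(\frac1N\|\tilde\Delta^{*}(\sigma)\|_2^2\bigr)$. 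This produces the integral inequality
\[
\frac1N\|\tilde\Delta^{*}(t)\|_2^2\le C_0\,\frac1N\|\tilde\Delta^{*}(0)\|_2^2+C_2\int_0^t\frac1N\|\tilde\Delta^{*}(\sigma)\|_2^2\,\dd\sigma,
\]
to which Grönwall's inequality applies, giving $\frac1N\|\tilde\Delta^{*}(t)\|_2^2\le C_0 e^{C_2 t}\,\frac1N\|\tilde\Delta^{*}(0)\|_2^2=O\bigl(\frac1N\|\tilde\Delta^{*}(0)\|_2^2\bigr)$.

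The step that requires care — more than it is a genuine obstacle — is the bookkeeping around Grönwall: the initial discrepancy $\frac1N\|\tilde\Delta^{*}(0)\|_2^2$ must be isolated as the single additive constant, so that it survives only through the multiplicative factor $e^{C_2 t}$, while the coupling term $\hat B\Delta^{*}$ must be kept strictly under the time integral and controlled by $\tilde\Delta^{*}(\sigma)$ rather than $\tilde\Delta^{*}(t)$, so that the resulting inequality is not circular. Compared with Lemma~\ref{l:Delta}, the argument is genuinely simpler, as the perturbation lives entirely in the initial data and $\hat B$ is deterministic.
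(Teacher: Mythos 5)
Your proposal is correct and follows essentially the same route as the paper's proof: integrate \eqref{eq:z_hat} for $\hat z$ and $\tilde z$, subtract, split the bracket into $\left(\hat B\hat z_{\cdot,\tilde s}\right)_i\Delta^{*}_{i,s'}+\tilde z_{i,s'}\left(\hat B\Delta^{*}_{\cdot,\tilde s}\right)_i$, use $\left(\hat B\hat z_{\cdot,\tilde s}\right)_i=O(1)$ and $\|\hat B\|_2=O(1)$, and close with Cauchy--Schwarz and Gr\"onwall, keeping the initial discrepancy as the sole additive constant. The only cosmetic difference is that you pass to $\tilde\Delta^{*}$ inside $\hat B$ via entrywise monotonicity before applying the operator-norm bound, whereas the paper applies $\|\hat B\|_2$ to $\Delta^{*}_{\cdot,\tilde s}(\tau)$ directly; both are valid and yield the same inequality.
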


\begin{proof}(Lemma \ref{l:Delta*})

Similarly to the proof of Lemma \ref{l:Delta}:
\begin{align*}
\Delta_{i,s}^*(t)=&\Delta_{i,s}^*(0)+\sum_{s' \in \mathcal{S}} q_{s' \to s} \int_{0}^{t} \Delta_{i,s'}^*(\tau) \dd \tau\\
&+ \int_{0}^{t} \sum_{s', \tilde{s} \in \mathcal{S}}q_{\tilde{s}; s' \to s}\left[\hat{z}_{i,s'}(\tau) \left(\hat{B} \hat{z}_{\cdot, \tilde{s}}(\tau) \right)_{i}-\tilde{z}_{i,s'}(\tau) \left(\hat{B} \tilde{z}_{\cdot, \tilde{s}}(\tau) \right)_{i} \right]  \dd \tau \\
\left(\Delta_{i,s}^*(t)\right)^2\leq &C_1\left(\left(\Delta_{i,s}^*(0) \right)^2 +\sum_{s' \in \mathcal{S}} \int_{0}^{t} \left(\Delta_{i,s'}^*(\tau) \right)^2 \dd \tau \right.\\
&\left.+ \int_{0}^{t} \sum_{s', \tilde{s} \in \mathcal{S}}\left[\hat{z}_{i,s'}(\tau) \left(\hat{B} \hat{z}_{\cdot, \tilde{s}}(\tau) \right)_{i}-\tilde{z}_{i,s'}(\tau) \left(\hat{B} \tilde{z}_{\cdot, \tilde{s}}(\tau) \right)_{i} \right]^2  \dd \tau  \right).
\end{align*}

For the expression in square brackets, note that $\left(\hat{B} \hat{z}_{\cdot, \tilde{s}}(\tau) \right)_{i}=O(1)$ uniformly in $i$ according to Remark \ref{r:B_hat}.

\begin{align*}
\hat{z}_{i,s'}(\tau) \left(\hat{B} \hat{z}_{\cdot, \tilde{s}}(\tau) \right)_{i}-\tilde{z}_{i,s'}(\tau) \left(\hat{B} \tilde{z}_{\cdot, \tilde{s}}(\tau) \right)_{i}=&\left(\hat{B} \hat{z}_{\cdot, \tilde{s}}(\tau) \right)_{i} \Delta_{i,s'}^{*}(\tau)+\tilde{z}_{i,s'}(\tau) \left(\hat{B} \Delta_{\cdot, \tilde{s}}^*(\tau) \right)_{i} \\
\left[\hat{z}_{i,s'}(\tau) \left(\hat{B} \hat{z}_{\cdot, \tilde{s}}(\tau) \right)_{i}-\tilde{z}_{i,s'}(\tau) \left(\hat{B} \tilde{z}_{\cdot, \tilde{s}}(\tau) \right)_{i} \right]^2 \leq &2 \left(\left(\Delta_{i,s'}^{*}(\tau) \right)^2+\left(\hat{B} \Delta_{\cdot, \tilde{s}}^*(\tau) \right)_{i}^2 \right)
\end{align*}

Clearly, $\|\hat{B} \|_2=O(1)$ making
\begin{align*}
\frac{1}{N} \left \| \tilde{\Delta}^*(t) \right \|_2^2 \leq& C_2 \left(\frac{1}{N} \left \| \tilde{\Delta}^*(0) \right \|_2^2+\int_{0}^{t} \frac{1}{N} \left \| \tilde{\Delta}^*(\tau) \right \|_2^2 \dd \tau\right) \\
\frac{1}{N} \left \| \tilde{\Delta}^*(t) \right \|_2^2 =& O \left(\frac{1}{N} \left \| \tilde{\Delta}^*(0) \right \|_2^2 \right)
\end{align*}
by Grönwall's inequality.
\end{proof}

\begin{lemma}
\label{l:homogenous_trick}
Recall $\mathcal{E}_0$ from Theorem \ref{t:general_homogeneous_upper_bound}. Note that the event can be reexpressed as  
\begin{align*}
    \mathcal{E}_0 = \left \{ \frac{1}{N} \left \| \tilde{\Delta}^*(0) \right \|_2^2 \leq\frac{c_0}{d} \right \}.
\end{align*}
 Assume $\pr \left(\mathcal{E}_0^c \right)$ decays super polinomially in $N$. Then
\begin{align*}
\sup_{0 \leq \tau \leq t} \max_{\substack{1 \leq k \leq K \\S s\in \mathcal{S}}}\left \|\bar{z}_{k,s}(\tau)-x_{k,s}(\tau) \right \|=O \left( \frac{1}{d} \right).
\end{align*}
\end{lemma}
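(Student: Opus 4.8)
The plan is to derive a closed integral inequality for the block-averaged discrepancy $e_{k,s}(\tau):=\bar z_{k,s}(\tau)-x_{k,s}(\tau)$ directly in the $L^2$-sense and close it by Grönwall, isolating a single \emph{source} term that the homogeneity event $\mathcal{E}_0$ forces down to order $1/d$. First I would average the NIMFA system \eqref{eq:NIMFA2} over a block $I_k$ and subtract the BHMFA \eqref{eq:x}. Since $x_{k,s}(0)=\bar z_{k,s}(0)$ we have $e_{k,s}(0)=0$, and after integrating in time the linear terms contribute $\sum_{s'}q_{s'\to s}\int_0^t e_{k,s'}\dd\tau$, while the quadratic terms require the difference of $\frac{1}{N_k}\sum_{i\in I_k}z_{i,s'}(Bz_{\cdot,\tilde s})_i$ and $x_{k,s'}\sum_l w_{kl}\pi_l x_{l,\tilde s}$. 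Inserting the intermediate quantity $\frac{1}{N_k}\sum_{i\in I_k}z_{i,s'}(\hat B z_{\cdot,\tilde s})_i$, which by Remark \ref{r:B_hat} equals $\bar z_{k,s'}\sum_l w_{kl}\pi_l\bar z_{l,\tilde s}$, splits this into a \emph{structured} part $\bar z_{k,s'}\sum_l w_{kl}\pi_l\bar z_{l,\tilde s}-x_{k,s'}\sum_l w_{kl}\pi_l x_{l,\tilde s}$ and a \emph{source} part $R_k:=\frac{1}{N_k}\sum_{i\in I_k}z_{i,s'}((B-\hat B)z_{\cdot,\tilde s})_i$. Telescoping the products (writing $\bar z=x+e$ and using $w_{kl}\leq M$, $x,\bar z\in[0,1]$) shows the structured part, like the linear terms, is $O(\max_{k,s}|e_{k,s}|)$ pointwise, i.e. exactly what Grönwall absorbs.

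The heart of the argument is showing $\sup_{\tau\leq t}\max\|R_k(\tau)\|=O(1/d)$, and this is where $\mathcal{E}_0$ enters. I would decompose $z_{\cdot,s}=\tilde z_{\cdot,s}+r_{\cdot,s}$ with $r_{i,s}:=z_{i,s}-\tilde z_{i,s}=\Delta_{i,s}+\Delta_{i,s}^{*}$, where by \eqref{eq:x_identity2} the reference $\tilde z_{i,s}=x_{k,s}$ is constant on each block. Expanding $R_k$ bilinearly gives four terms; the three containing at least one factor $r$ are estimated by Cauchy--Schwarz from $\|B-\hat B\|_2=O(1/\sqrt d)$ (on $\mathcal{E}$, via Corollary \ref{c:spectral}) together with $\frac1N\|r_{\cdot,s}\|_2^2=O(1/d)$, the latter following from $r^2\leq 2(\tilde\Delta^2+(\tilde\Delta^{*})^2)$, Lemma \ref{l:Delta} on $\mathcal{E}$, and Lemma \ref{l:Delta*} combined with $\mathcal{E}_0=\{\frac1N\|\tilde\Delta^*(0)\|_2^2\leq c_0/d\}$. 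Each such term carries a prefactor $1/N_k=O(1/N)$ against a product of two norms of order $\sqrt{N/d}$, giving $O(1/d)$ (the pure-$r$ term is even smaller, $O(d^{-3/2})$). Homogeneity is indispensable precisely here: without $\mathcal{E}_0$ one only has $\frac1N\|\tilde\Delta^*\|_2^2=O(1)$ and these terms degrade back to the $O(1/\sqrt d)$ of Theorem \ref{t:general_upper_bound}.

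The remaining \emph{main} term $\frac{1}{N_k}\sum_{i\in I_k}\tilde z_{i,s'}((B-\hat B)\tilde z_{\cdot,\tilde s})_i$ is the delicate one and must be handled in $L^2$ rather than on an event. Because $\tilde z$ is block-constant it collapses to $x_{k,s'}\sum_l x_{l,\tilde s}M_{kl}$, where $M_{kl}:=\frac{1}{N_kN\rho}\sum_{i\in I_k,\,j\in I_l,\,j\neq i}(a_{ij}-p_{ij})$ and the off-diagonal correction from $\mathcal{D}$ contributes only a deterministic $O(1/N)=O(1/d)$ shift. The key point is that $M_{kl}$ depends only on the graph, is centered, and is a normalized sum of $\Theta(N^2)$ independent bounded centered entries with total variance $\Theta(N^2\rho)$, so $\E(M_{kl}^2)=O(1/(Nd))$. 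Since $x_{k,s'},x_{l,\tilde s}\in[0,1]$ are $G$-measurable but bounded, they are absorbed without ever computing their (nontrivial) correlation with $M_{kl}$, yielding an $L^2$ bound of order $1/\sqrt{Nd}=O(1/d)$ because $d\leq N$.

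Finally I would assemble $\|R_k(\tau)\|=O(1/d)$ uniformly in $\tau\leq t$: the main term is bounded unconditionally in $L^2$, while for the $r$-terms I split the expectation over $\mathcal{E}\cap\mathcal{E}_0$ (where the $O(1/d)$ estimate holds) and its complement (where the crude bounds $\|B\|_2=O(N/d)$ and $\|r\|_2=O(\sqrt N)$ leave at most a polynomial factor, killed by the super-polynomial decay of $\pr((\mathcal{E}\cap\mathcal{E}_0)^c)$). Taking $L^2$ norms in the integral equation then gives $\mathcal{G}(t)\leq C\int_0^t\mathcal{G}(\tau)\dd\tau+O(1/d)$ for $\mathcal{G}(t):=\sup_{\tau\leq t}\max_{k,s}\|e_{k,s}(\tau)\|$, and Grönwall concludes. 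I expect the main obstacle to be the $L^2$ control of the main term: its smallness is genuinely probabilistic (of order $1/\sqrt{Nd}$, and \emph{not} attainable as a high-probability sample-path bound once $d=\Theta(N)$, where any logarithmic concentration factor would overwhelm $1/d$), so it must be kept inside the expectation and combined with — rather than reduced to — the on-event estimates of the other terms.
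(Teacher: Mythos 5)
Your proposal is correct and follows essentially the same route as the paper's proof: the same splitting of the quadratic term via the intermediate quantity $\frac{1}{N_k}\sum_{i\in I_k} z_{i,s'}\left(\hat B z_{\cdot,\tilde s}\right)_i$, the same decomposition $z=\tilde z+\Delta^{**}$ (your $r=\Delta+\Delta^*$) controlled on $\mathcal{E}\cap\mathcal{E}_0$ by Lemmas \ref{l:Delta} and \ref{l:Delta*}, the same $L^2$ (rather than on-event) treatment of the centered block term $\eta_{k,s',\tilde s}$ giving $O\left(1/\sqrt{Nd}\right)=O(1/d)$, and the same Gr\"onwall closure after discarding the complement event via superpolynomial decay. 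The only differences are presentational (telescoping $\bar z=x+e$ instead of the paper's fixed-$i_0$ rewriting of the $\hat B$ terms), not substantive.
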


\begin{proof} (Lemma \ref{l:homogenous_trick})

Define the event $\bar{\mathcal{E}}:=\mathcal{E} \cap \mathcal{E}_0$. Also, let $h_{k,s}(t):=\bar{z}_{k,s}(t)-x_{k,s}(t).$ Note that $h_{k,s}(0)=0.$

\begin{align*}
h_{k,s}(t)=&\sum_{s' \in \mathcal{S}}q_{s' \to s} \int_{0}^{t} h_{k,s'}(\tau) \dd \tau\\
&+\sum_{s', \tilde{s} \in \mathcal{S}}q_{\tilde{s}, s' \to s} \int_{0}^t \frac{1}{N_k}\sum_{i \in I_K} \left[z_{i,s'}(\tau) \left(B z_{\cdot, \tilde{s}}(\tau) \right)_i-\tilde{z}_{i,s'}(\tau) \left(\hat{B} \tilde{z}_{\cdot, \tilde{s}}(\tau) \right)_i \right ] \dd \tau
\end{align*}
We split the terms in the square bracket into two parts:
\begin{align}
\label{eq:homogeneous_trick_split}
\begin{split}
    &z_{i,s'}(\tau) \left(B z_{\cdot, \tilde{s}}(\tau) \right)_i-\tilde{z}_{i,s'}(t) \left(\hat{B} \tilde{z}_{\cdot, \tilde{s}}(\tau) \right)_i =\\
    &z_{i,s'}(\tau) \left(\left(B-\hat{B}\right) z_{\cdot, \tilde{s}}(\tau) \right)_i+z_{i,s'}(\tau) \left(\hat{B} z_{\cdot, \tilde{s}}(\tau) \right)_i-\tilde{z}_{i,s'}(\tau) \left(\hat{B} \tilde{z}_{\cdot, \tilde{s}}(\tau) \right)_i.
\end{split}
\end{align}
We start with the first term.

Define $\Delta_{i,s}^{**}(t):=\Delta_{i,s}(t)+\Delta_{i,s}^{*}(t)$. From Lemma \ref{l:Delta} and \ref{l:Delta*} we know that on the event $\bar{\mathcal{E}}$ $\frac{1}{N} \left \| \Delta_{\cdot ,s}^{**}(t)\right \|_2^2=O \left(\frac{1}{d} \right)$ uniformly in $ 0 \leq \tau \leq t.$ 

Clearly, $z_{i,s}(\tau)=\tilde{z}_{i,s}(\tau)+\Delta_{i,s}^{**}(\tau),$ making
\begin{align*}
\frac{1}{N_k}\sum_{i \in I_k}z_{i,s'}(\tau) \left(\left(B-\hat{B}\right) z_{\cdot, \tilde{s}}(\tau) \right)_i=&\frac{1}{N_k}\sum_{i \in I_k}\tilde{z}_{i,s'}(\tau) \left(\left(B-\hat{B}\right) \tilde{z}_{\cdot, \tilde{s}}(\tau) \right)_i\\
&+O \left( \max_{s \in \mathcal{S}}\frac{1}{N}\sum_{i } \left |\left(\left(B-\hat{B} \right)\Delta_{\cdot, s}^{**}(\tau) \right)_i \right|  \right).
\end{align*}
 Since we are on  the event $\bar{\mathcal{E}}$
 \begin{align*}
    \frac{1}{N}\sum_{i } \left |\left(\left(B-\hat{B} \right)\Delta_{\cdot, s}^{**}(\tau) \right)_i \right|  \leq& \sqrt{\frac{1}{N} \left\|(B-\hat{B}) \Delta_{\cdot, s}^{**}(\tau) \right\|_2^2} \\
    \leq& \sqrt{ \underbrace{\left\|(B-\hat{B})  \right\|_2^2}_{=O \left( \frac{1}{d}\right)} \underbrace{\frac{1}{N} \left \|\Delta_{\cdot, s}^{**}(\tau) \right \|_2^2}_{=O \left(\frac{1}{d} \right)}} =O \left(\frac{1}{d} \right).
 \end{align*}
For the other term note that  for $i\in I_k$ $\tilde{z}_{i,s}(\tau)=x_{k,s}(\tau)$ making
\begin{align*}
\eta_{k,s', \tilde{s}}(\tau):=&\frac{1}{N_k}\sum_{i \in I_k}\tilde{z}_{i,s'}(\tau) \left(\left(B-\hat{B}\right) \tilde{z}_{\cdot, \tilde{s}}(\tau) \right)_i\\
=& \sum_{l}x_{k,s'}(\tau)x_{l,\tilde{s}}(\tau) \frac{1}{N_k}\sum_{\substack{i \in I_k \\ j \in I_l}} (b_{ij}-\hat{b}_{ij}).
\end{align*}
Since $\E(b_{ij})=\hat{b}_{ij}$  (except for $i=j$) and $b_{ij}=\frac{1}{N \rho}a_{ij}$ are independent  we have
\begin{align*}
\left \| \eta_{k,s', \tilde{s}}(\tau) \right \|=O \left(\sqrt{\frac{1}{\left(N_k N \rho\right)^2}\sum_{\substack{i \in I_k \\ j \in I_l}} p_{ij}(1-p_{ij})}+ \frac{1}{N} \right)=O \left(\frac{1}{\sqrt{Nd}} \right)=O \left( \frac{1}{d} \right).
\end{align*}

We now turn to the other terms in \eqref{eq:homogeneous_trick_split}. Using \eqref{eq:B_hat_identity} we know that $(\hat{B}u)_i$ only depend on $k$ where $i \in I_k$ for any $u \in \mathbb{R}^N.$

Fix any $i_0 \in I_k.$
\begin{align*}
&\frac{1}{N_k}\sum_{i \in I_k} z_{i,s'}(\tau) \left(\hat{B} z_{\cdot, \tilde{s}}(\tau) \right)_i-\tilde{z}_{i,s'}(\tau) \left(\hat{B} \tilde{z}_{\cdot, \tilde{s}}(\tau) \right)_i= \\
&\bar{z}_{k,s'}(\tau) \left(\hat{B} z_{\cdot, \tilde{s}}(\tau) \right)_{i_0}-x_{k,s'}(\tau) \left(\hat{B} \tilde{z}_{\cdot, \tilde{s}}(\tau) \right)_{i_0}\\
=&\left(\hat{B} z_{\cdot, \tilde{s}}(\tau) \right)_{i_0}h_{k,s'}(\tau)+x_{k,s}(\tau)\left(\hat{B} \left(z_{\cdot,\tilde{s}}(\tau)-\tilde{z}_{\cdot, \tilde{s}}(\tau) \right) \right)_{i_0} \\
=&\underbrace{ \left(\hat{B} z_{\cdot, \tilde{s}}(\tau) \right)_{i_0}}_{=O(1)}h_{k,s'}(\tau)+x_{k,s}(\tau) \sum_{l}w_{kl}\pi_l h_{l,\tilde{s}}(\tau).
\end{align*}

Define
\begin{align*}
\bar{h}(\tau):=& \max_{\substack{1 \leq k \leq K \\ s\in \mathcal{S}}} \left\|h_{k,s}(\tau) \right \|, \\
\bar{h}^*(\tau):=&\max_{\substack{1 \leq k \leq K \\ s\in \mathcal{S}}} \left\|h_{k,s}(\tau) \1{\bar{\mathcal{E}}} \right \|.
\end{align*}
Since $\left |h_{k,s}(\tau)\right| \leq 1$ and $\pr \left( \bar{\mathcal{E}}^c\right)$ decays superpolynomially, we have $\bar{h}(\tau)=\bar{h}^*(\tau)+O \left( \frac{1}{d}\right).$

From the bounds above we got that there must be some $C>0$ such that
\begin{align*}
    \bar{h}^*(t) \leq O \left( \frac{1}{d} \right)+C \int_0^t \bar{h}^*(\tau) \dd \tau,
\end{align*}
resulting in 
$$\sup_{0 \leq \tau \leq t}\bar{h}(\tau)=\sup_{0 \leq \tau \leq t}\bar{h}^*(\tau)+O \left( \frac{1}{d} \right)=O \left( \frac{1}{d} \right).$$
\end{proof}

This leads us directly to the proof of Theorem \ref{t:general_homogeneous_upper_bound}.

\begin{proof}( Theorem \ref{t:general_homogeneous_upper_bound})
\begin{align*}
&\sup_{0 \leq \tau \leq t} \max_{ \substack{1 \leq k \leq K \\ s \in \mathcal{S}}}  \left\|\bar{\xi}_{k,s}(\tau)-x_{k,s}(\tau)  \right\| =\\
&\sup_{0 \leq \tau \leq t} \max_{ \substack{1 \leq k \leq K \\ s \in \mathcal{S}}}  \left\| \left(\bar{\xi}_{k,s}(\tau)-x_{k,s}(\tau) \right)\1{\mathcal{E}}  \right\|  +O \left(\max \left \{\frac{1}{d}, \frac{1}{\sqrt{N}} \right \} \right) \overset{ \textit{Lemma \ref{l:xi_bar_error_decompose} }}{\leq} \\
&\sup_{0 \leq \tau \leq t} \max_{ \substack{1 \leq k \leq K \\ s \in \mathcal{S}}}  \left\| \bar{z}_{k,s}(\tau)-x_{k,s}(\tau)  \right\|  +O \left(\max \left \{\frac{1}{d}, \frac{1}{\sqrt{N}} \right \} \right) \overset{ \textit{Lemma \ref{l:homogenous_trick} }}{=} \\
&O \left(\max \left \{\frac{1}{d}, \frac{1}{\sqrt{N}} \right \} \right)
\end{align*}

\end{proof}

\subsection{Lower bounds}\label{subsec:lower}

Now we turn our attention to the lower bounds. We set $K=1$ and $w_{11}=1$, that is, we take $G$ to be an Erdős-Rényi graph of density $\rho$. Clrearly, $d=(N-1) \rho$ for the Erdős-Rényi graph.

For the ease of notation we omit the $k$ indices in this section as there is only one block.

We start with Theorem \ref{t:general_lower_bound}.

\begin{lemma}
\label{l:lower_bound}
For the catalyst process on the Erdős-Rényi graph we have
\begin{align*}
\left|\bar{z}_a(1)-x_a(1) \right| \geq e^{-\bar{{z}}_{c}(0)} \left( \frac{1}{N} \left(z_{\cdot, a}(0) \right)^{\intercal}(B-\hat{B})z_{\cdot,c}(0) \right)-\frac{1}{2} \|B-\hat{B} \|_2^2. 
\end{align*}
\end{lemma}

\begin{proof}
Recall \eqref{eq:catalyst}. Define
\begin{align*}
\phi_i:=&\frac{1}{N \rho}\sum_{j}a_{ij}z_{j,c}(0)=\left(Bz_{\cdot, c}(0) \right)_i \\
\bar{\phi}_i:=& \left(\hat{B}z_{\cdot, c}(0) \right)_i=\bar{z}_{c}(0)=x_c(0).
\end{align*}
Clearly,
\begin{align*}
    \bar{z}_{a}(t)=&\frac{1}{N}\sum_{i}z_{i,a}(t)=\frac{1}{N}\sum_{i}z_{i,a}(0)e^{-\phi_i t} \\
    x_a(t)=&x_a(0)e^{-x_c(0)t}=\frac{1}{N}\sum_{i}z_{i,a}(0)e^{-\bar{\phi}_i t},
\end{align*}
making
\begin{align*}
\bar{z}_a(1)-x_a(1)=\frac{1}{N}\sum_{i} \left(e^{-\phi_i}-e^{-\tilde{\phi}_i} \right)z_{i,a}(0).
\end{align*}

Using the mean value theorem there is a $\xi_{i} \in [\tilde{\phi}_i, \phi_i]$ such that
\begin{align*}
 e^{-\phi_i}=e^{-\tilde{\phi}_i}-e^{-\tilde{\phi}_i} \left(\phi_i-\tilde{\phi}_i \right)+\frac{1}{2}e^{-\xi_i} \left(\phi_i-\tilde{\phi}_i \right)^2,   
\end{align*}
implying that
\begin{align*}
-\left(\bar{z}_a(1)-x_a(1) \right)=\frac{1}{N}\sum_{i}e^{-\bar{\phi}_i} \left(\phi_i-\tilde{\phi}_i \right)z_{i,a}(0)-\frac{1}{2N} \sum_{i}e^{-\xi_i}\left(\phi_i-\tilde{\phi}_i \right)^2 z_{i,a}(0).
\end{align*}

Observe that $\tilde{\phi}_{i}=\bar{z}_c(0)$ is independent of $i$.
\begin{align*}
&\frac{1}{N}\sum_{i}e^{-\tilde{\phi}_i} \left(\phi_i-\tilde{\phi}_i \right)z_{i,a}(0)=\frac{e^{-\bar{z}_a(0)}}{N}\sum_{i} \left(\phi_i-\tilde{\phi}_i \right)z_{i,a}(0)= \\
&\frac{e^{-\bar{z}_a(0)}}{N}\sum_{i} \left( \left( B-\hat{B}\right)z_{\cdot, c}(0) \right)_i z_{i,a}(0)=e^{-\bar{z}_a(0)} \left( \frac{1}{N}\left( z_{\cdot,a}(0)\right)^{\intercal}(B-\hat{B})z_{\cdot,c}(0) \right)
\end{align*}

Using $\xi_i \geq 0$ we also have that
\begin{align*}
&\frac{1}{2N} \sum_{i}e^{-\xi_i}\left(\phi_i-\tilde{\phi}_i \right)^2 z_{i,a}(0) \leq \frac{1}{2N} \sum_{i}\left(\phi_i-\tilde{\phi}_i \right)^2  \leq \\
& \frac{1}{2N} \left \| \phi-\tilde{\phi} \right\|_2^2=\frac{1}{2N} \left \| \left(B-\hat{B} \right)z_{\cdot,c}(0) \right\|_2^2 \leq  \\
& \frac{1}{2N} \left \| B-\hat{B}  \right\|_2^2 \left \| z_{\cdot,c}(0) \right\|_2^2 \leq \frac{1}{2} \left \| B-\hat{B}  \right\|_2^2
\end{align*}
concluding the proof.
\end{proof}

\begin{remark}
\label{r:Erdos}
Set the initial conditions such that $z_{i,a}(0)=z_{i,c}(0)=\frac{1}{2}u_i$ and $z_{i,b}(0)=1-u_i$ where $0 \leq u_i \leq 1.$ Furthermore, define $\bar{u}:=\frac{1}{N} \sum_{i} u_i.$

According to Lemma \ref{l:lower_bound} we get the inequality
\begin{align*}
\left|\bar{z}_a(1)-x_a(1) \right| \geq \frac{1}{4}e^{-\frac{1}{2}\bar{u}} \left( \frac{1}{N} u^{\intercal}(B-\hat{B})u \right)-\frac{1}{2} \|B-\hat{B} \|_2^2. 
\end{align*}

As we have seen before: $\|B-\hat{B} \|_2^2=O \left( \frac{1}{d} \right)$ which will be negligible compared to the main term.

The first quantity on the right is closly related to the \emph{modularity} of the Erdős-Rényi graph. Choose the vector $u$ to be the indicator of a set $H$: $u=\1{H}.$ Then

\begin{align*}
\frac{1}{4}e^{-\frac{1}{2}\bar{u}} \left( \frac{1}{N} u^{\intercal}(B-\hat{B})u \right)=&\frac{1}{4}e^{-\frac{1}{2}\frac{|H|}{N}} \left[\frac{1}{Nd} \left(e(H)-\frac{d}{N} |H |^2 \right) \right]-\frac{1}{N} \left(\frac{1}{4}e^{-\frac{1}{2}\frac{|H|}{N}} \frac{e(H)}{Nd} \right), 
\end{align*}
where $e(H)=\sum_{i,j \in H}a_{ij}$ and the last term comes $\frac{d}{N \rho}=1-\frac{1}{N}.$ Note that
\begin{align*}
\frac{1}{N} \left(\frac{1}{4}e^{-\frac{1}{2}\frac{|H|}{N}} \frac{e(H)}{Nd} \right) \leq \frac{1}{N} \frac{\bar{\delta}}{d}
\end{align*}
which has expectation $\frac{1}{N}$.

Using (article of erdos) or (corollary 2 page 4 Scott Bollobas) we have that the quantity 
\[
\left|\left(e(H)-\frac{d}{N} |H |^2 \right)\right| \geq \sqrt{p(1-p)}\frac{N^{3/2}}{80}
\]
  for some $H$ where $p$ stand for the \emph{empirical} edge density which strongly concentrates on $\rho, $ therefore, we have that the right hand side is $\Omega\left(\frac{1}{\sqrt{d}}\right)$ when $\rho\leq 1-c$ for some $0<c<1.$
\end{remark}

Now we can prove Theorem \ref{t:general_lower_bound}.
\begin{proof}(Theorem \ref{t:general_lower_bound})
From Theorem \ref{t:general_upper_bound} we already know that $\left \|\bar{\xi}_a(1)-x_{a}(t)  \right \|=O \left(\frac{1}{\sqrt{d}} \right)$, so only the lower bound is needed.

\begin{align*}
 \left \|\bar{\xi}_a(1)-x_{a}(1)  \right \| \geq & \E \left( \left | \bar{\xi}_a(1)-x_a(1) \right| \right) \\
 \geq& \E \left( \left |\bar{z}_a(1)-x_a(1) \right |\right)- \E \left(\left | \bar{\xi}_a(1)-\bar{z}_a(1) \right|\right).
\end{align*}

With the same technique as in the proof of Lemma \ref{l:xi_bar_error_decompose} one can show that
\begin{align*}
\E \left(\left | \bar{\xi}_a(1)-\bar{z}_a(1) \right|\right) \leq  \left \| \bar{\xi}_a(1)-\bar{z}_a(1) \right \|= O \left(\max \left \{ \frac{1}{d}, \frac{1}{\sqrt{N}} \right \} \right)    
\end{align*}
 based on Theorem and 4.4 and 4.14 in \cite{keligerConcentrationMeanField2024a}.   

Recall $\mathcal{E}$ from \eqref{eq:events} and the fac that $\pr \left(\mathcal{E}^c \right) $ decays superpolynomial. Using the initial conditions set in Remark \ref{r:Erdos} we obtain
\begin{align*}
   &\E \left( \left |\bar{z}_a(1)-x_a(1) \right |\right) \geq \E \left( \left |\bar{z}_a(1)-x_a(1) \right | \1{\mathcal{E}} \right)\\
   \geq& \frac{e^{-\frac{1}{2}}}{320} \E \left( \frac{\sqrt{(N-1)p(1-p)}}{d} \1{\mathcal{E}}\right)-\underbrace{\frac{1}{2}\E \left( \left \|B-\hat{B} \right \|_{2}^2 \1{\mathcal{E}}\right)}_{=O \left( \frac{1}{d} \right)}-\frac{1}{N} \\
   \geq& \frac{e^{-\frac{1}{2}}}{320} \frac{1}{d} \E \left(\sqrt{(N-1)p(1-p)} \right)-O \left( \frac{1}{d} \right) \\
   =& \frac{e^{-\frac{1}{2}}}{320}  \E \left(\sqrt{\frac{p}{\rho}(1-p)} \right) \frac{1}{\sqrt{d}}-O \left( \frac{1}{d} \right).
\end{align*}

Note that $0 \leq a_{ij} \leq 1$ are independent for $i<j$ and 
\[p=\frac{1}{{\binom{N}{2}}}\sum_{i<j}a_{ij}.\]  
Using Bernstein inequality one can derive that for any $\varepsilon>0$, $\pr \left(|p-\rho|>\varepsilon \right)$ decays superpolynomially. Along with $\rho<1-c$ this implies $\E \left(\sqrt{\frac{p}{\rho}(1-p)} \right) =\Omega \left(1 \right), $ concluding the proof.

\end{proof}

Next, we turn to Theorem \ref{t:homogeneous_lower_bound}.

Take the degree process with initial conditions $\xi_{i,a}(0)=1$. From \eqref{eq:degree_process} $\E_{G}(\xi_{i,a}(t))=z_{i,a}(t)=e^{-\frac{\delta_i}{N \rho}t}$ and $x_{a}(t)=e^{-t}.$

\begin{lemma}
\label{l:moment}
\begin{align}
\label{eq:exp}
\E \left(e^{-\frac{\delta_1}{d}t} \right)=& \exp \left(-d \left(1-e^{-\frac{t}{d}} \right)+O \left(\frac{1}{N} \right) \right) \\
\label{eq:cov}
\E \left(e^{-\frac{\delta_1}{d}t}e^{-\frac{\delta_2}{d}t} \right)=&\exp \left(-2d \left(1-e^{-\frac{t}{d}} \right)+O \left(\frac{1}{N} \right) \right) 
\end{align}
\end{lemma}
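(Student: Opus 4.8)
The plan is to exploit the fact that on an Erdős–Rényi graph the degree $\delta_1 = \sum_{j \neq 1} a_{1j}$ is a sum of $N-1$ independent Bernoulli($\rho$) variables, so the exponential moment factorizes. First I would write
$$\E\left(e^{-\frac{\delta_1}{d}t}\right) = \prod_{j=2}^{N} \E\left(e^{-\frac{a_{1j}}{d}t}\right) = \mu^{N-1}, \qquad \mu := 1 - \rho\bigl(1 - e^{-t/d}\bigr),$$
since each factor equals $(1-\rho) + \rho e^{-t/d}$. Taking logarithms and applying $\log(1-x) = -x + O(x^2)$ with $x = \rho(1 - e^{-t/d})$ produces the leading term $(N-1)\cdot\bigl(-\rho(1-e^{-t/d})\bigr) = -d(1-e^{-t/d})$, using $d = (N-1)\rho$. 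Exponentiating yields \eqref{eq:exp}.

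The only genuine bookkeeping is controlling the error. I would use the elementary bound $1 - e^{-t/d} \leq t/d$, so that $x \leq \rho t/d = t/(N-1)$, and therefore the accumulated quadratic remainder is $(N-1)\cdot O(x^2) = (N-1)\cdot O\bigl(t^2/(N-1)^2\bigr) = O(1/N)$, with the implied constant depending on $t$ as per the stated convention. This is precisely what delivers the $O(1/N)$ inside the exponent; note that it is the \emph{product} $\rho(1-e^{-t/d})$ that is $O(1/N)$ uniformly, even though $\rho$ alone may be $\Theta(1)$.

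For \eqref{eq:cov} the key observation is that $\delta_1$ and $\delta_2$ are not independent: they share exactly the edge $a_{12}$. Writing $\delta_1 + \delta_2 = 2a_{12} + \sum_{j \geq 3}(a_{1j} + a_{2j})$ and using that the $2N-3$ edges appearing are distinct, hence independent, the joint exponential moment factorizes as
$$\E\left(e^{-\frac{\delta_1 + \delta_2}{d}t}\right) = \E\left(e^{-\frac{2a_{12}}{d}t}\right)\cdot \mu^{2(N-2)} = \nu\, \mu^{2(N-2)}, \qquad \nu := 1 - \rho\bigl(1 - e^{-2t/d}\bigr).$$
Taking logarithms, the product term gives $2(N-2)\log\mu = 2(N-1)\log\mu - 2\log\mu = -2d(1-e^{-t/d}) + O(1/N)$, since by the first part $\log\mu = O(1/N)$ makes the two dropped factors negligible, while $\log\nu = O(1/N)$ because $\rho(1-e^{-2t/d}) \leq 2\rho t/d = O(1/N)$. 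Summing the two contributions gives the exponent $-2d(1-e^{-t/d}) + O(1/N)$, which is \eqref{eq:cov}.

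I expect the main obstacle to be the modest error tracking for the shared edge and the reindexing from $N-1$ to $N-2$ factors in the covariance computation: one must verify that both the $\log\nu$ correction and the two ``missing factor'' terms are genuinely $O(1/N)$ rather than $O(\rho)$ with a $d$-dependent blow-up. This works exactly because $\rho\bigl(1-e^{-ct/d}\bigr) = \Theta(1/N)$ for fixed $t$, so every correction beyond the leading $-2d(1-e^{-t/d})$ collapses to the desired order.
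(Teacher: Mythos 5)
Your proof is correct and follows essentially the same route as the paper: both exploit the edge-independence of the Erd\H{o}s--R\'enyi graph to reduce \eqref{eq:exp} to the Bernoulli/binomial moment generating function (your $\mu^{N-1}$ is exactly the paper's $\bigl(1-\tfrac{d}{N-1}(1-e^{-t/d})\bigr)^{N-1}$, since $\rho = \tfrac{d}{N-1}$), and both handle \eqref{eq:cov} by factoring out the single shared edge $a_{12}$ and treating the remaining $2(N-2)$ edges as independent, with the same $O(1/N)$ bookkeeping based on $\rho(1-e^{-ct/d}) = O(1/N)$.
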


\begin{proof} (Lemma \ref{l:moment})

For \eqref{eq:exp} note that $\delta_1 \sim \operatorname{Bin} \left(N-1,\frac{d}{N-1} \right),$ hence, it is enough plug $-\frac{t}{d}$ into the moment generating function.
\begin{align*}
\E \left( e^{-\frac{\delta_1}{d}t} \right)=& \left(1-\frac{d}{N-1} \left(1-e^{-\frac{t}{d}} \right) \right)^{N-1}= \\
\log  \E \left( e^{-\frac{\delta_1}{d}t} \right)=&(N-1) \log \left(1-\frac{d}{N-1} \left(1-e^{-\frac{t}{d}} \right) \right) \\
=& (N-1) \left[-\frac{d}{N-1} \left(1-e^{-\frac{t}{d}} \right)+O \left(\frac{d^2}{(N-1)^2} \underbrace{\left(1-e^{-\frac{t}{d}} \right)^2}_{=O \left( \frac{1}{d^2} \right)} \right)\right] \\
=& -d\left(1-e^{-\frac{t}{d}} \right)+O \left( \frac{1}{N} \right)
\end{align*}

For \eqref{eq:cov} note that $\delta_1$ and $\delta_2$ have distinct terms besides $a_{12}$ making them roughly independent.

\begin{align*}
\E \left( e^{-\frac{\delta_1}{d}t}e^{-\frac{\delta_1}{d}t} \right)=&\E \left[ e^{-\frac{2a_{12}}{d}t} \exp \left(-\frac{t}{d} \sum_{j \neq 1,2} a_{1j} \right) \exp \left(-\frac{t}{d} \sum_{j \neq 1,2} a_{2j} \right)\right] \\
=& \E \left( e^{-\frac{2a_{12}}{d}t} \right)\E^2 \left[  \exp \left(-\frac{t}{d} \sum_{j \neq 1,2} a_{1j} \right) \right]
\end{align*}

The first terms is
\begin{align*}
\E \left( e^{-\frac{2a_{12}}{d}t} \right)=1-\frac{d}{N-1} \left(1-e^{-\frac{2t}{d}} \right)=1+O \left(\frac{1}{N} \right).
\end{align*}

For the second term notice $\sum_{j \neq 1,2}a_{1j} \sim \operatorname{Ber} \left(N-2, \frac{d}{N-1} \right)$ making
\begin{align*}
\E \left[  \exp \left(-\frac{t}{d} \sum_{j \neq 1,2} a_{1j} \right) \right]=& \left(1-\frac{d}{N-1} \left(1-e^{-\frac{t}{d}} \right) \right)^{N-2}\\
=&\exp \left(-d \left(1-e^{-\frac{t}{d}} \right)+O \left(\frac{1}{N} \right) \right) .
\end{align*}

Thus,
\begin{align*}
\E \left( e^{-\frac{\delta_1}{d}t}e^{-\frac{\delta_1}{d}t} \right)=&\left(1+O \left(\frac{1}{N} \right) \right) \left[\exp \left(-d \left(1-e^{-\frac{t}{d}} \right)+O \left(\frac{1}{N} \right) \right)    \right]^2 \\
=& \exp \left(-2d \left(1-e^{-\frac{t}{d}} \right)+O \left(\frac{1}{N} \right) \right) 
\end{align*}
\end{proof}

Now we can finally prove Theorem \ref{t:homogeneous_lower_bound}.

\begin{proof}(Theorem \ref{t:homogeneous_lower_bound} )

\begin{align*}
 \E\left[ \left. \left(\bar{\xi}_a(t)-x_a(t) \right)^2 \right | G \right]=& \mathbb{D}^2 \left( \bar{\xi}_a(t) |G \right)+\left(  \E \left( \bar{\xi}_a(t) | G \right)-x_a(t) \right)^2 \\
 =& \frac{1}{N^2} \sum_{i}e^{-\frac{\delta_i}{d}t}\left(1-e^{-\frac{\delta_i}{d}t} \right)\\
 & + \left( \frac{1}{N}\sum_{i}e^{-\frac{\delta_i}{d}t}-e^{-t} \right)^2 \\
 \|\bar{\xi}_a(t)-x_a(t) \|^2=& \frac{1}{N}\E \left[ e^{-\frac{\delta_1}{d}t}\left(1-e^{-\frac{\
 \delta_1}{d}t} \right) \right] \\
 &+\E \left[ \left( \frac{1}{N}\sum_{i}e^{-\frac{\delta_i}{d}t}-e^{-t} \right)^2 \right]
\end{align*}

The first term is
\begin{align*}
\frac{1}{N}\E \left[ e^{-\frac{\delta_1}{d}t}\left(1-e^{-\frac{\delta_1}{d}t} \right) \right]=&\frac{1}{N} \left[d \underbrace{\left(e^{-\frac{t}{d}}-e^{-\frac{2t}{d}} \right)}_{=\frac{t}{d}+O \left(\frac{1}{d^2} \right)}+O \left( \frac{1}{N} \right) \right]\\
=&\frac{1}{N} \left[t+O \left( \frac{1}{d} \right) \right] = \Theta \left( \frac{1}{N} \right).
\end{align*}

The second:
\begin{align}
\label{eq:error_deg}
\begin{split}
\E \left[ \left( \frac{1}{N}\sum_{i}e^{-\frac{\delta_i}{d}t}-e^{-t} \right)^2 \right]=& \frac{1}{N^2}\sum_{i} \sum_{j}\E \left(e^{-\frac{\delta_i}{d}t} e^{-\frac{\delta_j}{d}t} \right) \\
&-2e^{-t} \E \left(e^{-\frac{\delta_1}{d}t} \right)+e^{-2t}.
\end{split}
\end{align}
In the double sum the diagonal terms contribute at most $O\left( \frac{1}{N} \right)$ amount, hence,
\begin{align*}
\frac{1}{N^2}\sum_{i} \sum_{j}\E \left(e^{-\frac{\delta_i}{d}t} e^{-\frac{\delta_j}{d}t} \right)=&\E \left(e^{-\frac{\delta_1}{d}t} e^{-\frac{\delta_2}{d}t} \right)+O \left(\frac{1}{N} \right)= \\
&\exp \left(-2d \left(1-e^{-\frac{t}{d}} \right)+O \left(\frac{1}{N} \right) \right)+O \left(\frac{1}{N} \right)\\
=&\exp \left(-2d \left(1-e^{-\frac{t}{d}} \right)+O \left(\frac{1}{N} \right) \right).
\end{align*}

This means, \eqref{eq:error_deg} can be written as
\begin{align*}
&\exp \left(-2d \left(1-e^{-\frac{t}{d}} \right)+O \left(\frac{1}{N} \right) \right) \\
& -2e^{-t}\exp \left(-d \left(1-e^{-\frac{t}{d}} \right)+O \left(\frac{1}{N} \right) \right)+e^{-2t}= \\
&e^{-2t} \left[\exp \left(2t-2d \left(1-e^{-\frac{t}{d}} \right)-2 \exp \left(t-d \left(1-e^{-\frac{t}{d}} \right) \right) \right)+1 \right]+O\left( \frac{1}{N} \right)= \\
& e^{-2t}\left[\exp \left(t-d \left(1-e^{-\frac{t}{d}} \right) \right)  -1 \right]^2+O\left( \frac{1}{N} \right)= \\
&e^{-2t} \left(e^{\Theta(\frac{1}{d})}-1 \right)^2+O\left( \frac{1}{N} \right)=\Theta\left(\frac{1}{d^2} \right)+O\left( \frac{1}{N} \right)
\end{align*}

Putting the two terms together results in
\begin{align*}
\|\bar{\xi}_a(t)-x_a(t) \|^2=\Theta \left(\max \left \{\frac{1}{d^2},\frac{1}{N} \right \} \right).
\end{align*}

\end{proof}
\section{Conclusion}\label{sec:conclusion}
In this paper we showed how, for a general class of stochastic processes, the price paid in taking account the fluctuations of the underlying graph model depend on a third layer of randomness, i.e. how the initial conditions are taken. Remarkably, condition \eqref{eq:hom_condition} witness the threshold between two different error regimes, $O\left(\max\left\{1/d,1/\sqrt{n}\right\}\right)$ and $O\left(1/\sqrt{d}\right)$, only depending on how the initial conditions are homogeneously spread around the graph. 

One first interesting question is whether is possible to identify (in the sense of events similar to the one used in \eqref{eq:hom_condition}) interesting families of initial conditions  that can interpolate between the two regimes. Is it possible to find initial condition families that give an error in between the two we obtained for all processes of the type considered here? Furthermore, is it possible to identify the family of initial conditions that give the worst case error? Besides the theoretical aspect, these two questions entail practical consequences in the fashion explained at the beginning of section \ref{subsec:discussion}.

Despite using a class of graph models generic enough, but also able to model communities (which are usually the focus for many of the processes dealing with information spread of epidemics) it would be still of practical interest knowing if our results can be lifted to other type of models. We believe that the restriction imposed on the blocks sizes (they have to be of size $O(N)$) is of technical nature, but not substantial. Similar results should be obtained for rank one random graphs like Chung-Lu and for generic Inhomogeneous Random Graphs as introduced in \cite{bollobasPhaseTransitionInhomogeneous2007}, despite our proofs don't immediately generalize. Even more difficult seems the generalization to graph models were edges become correlated, like Maximal Entropy random graph or Preferential Attachment models.  

Another important effect, noticeable for example in epidemics study, is the fact that, for big outbreaks, the timescale to see a change in the community structure is compatible with the timescale at which the epidemic spreads, especially for endemic diseases. For accounting this influence is necessary to introduce a fourth layer of randomness given by the graph dynamics. Is not hard to believe that for graph dynamics that mix fast enough and are independent form the running stochastic process, the contribution of the fluctuations of the evolving graph structure are already taken in account by our annealing procedure. For example in \cite{hazraFunctionalCentralLimit2024} the largest eigenvalue of a dynamic Erdős-Rényi random graph is shown to follow a functional central limit theorem.  As spectral properties of the underlying random graph model are an ingredient of our results, is fair to conjecture that such gaussian fluctuation can't modify the order of the error. The natural question is therefore if there exists a dynamic graph model (possibly correlated with the running stochastic procecess) whose fluctuations affect the main graphical quantities in a way that the order of the mean-field approximation error needs to be modified.

\printbibliography

\end{document}